\numberwithin{equation}{section}
\newcommand{\RR}{\mathbb{R}}
\newcommand{\NN}{\mathbb{N}}
\newcommand{\ZZ}{\mathbb{Z}}
\newcommand{\dist}{\operatorname{dist}}
\newcommand{\cat}{\operatorname{cat}}
\newcommand{\dH}{\operatorname{d_{H}}}
\newtheorem{Theorem}{Theorem}[section]
\newtheorem{Lemma}[Theorem]{Lemma}
\theoremstyle{definition}
\newtheorem{Definition}[Theorem]{Definition}
\newtheorem{Assumption}[Theorem]{Assumption}
\theoremstyle{remark}
\newtheorem{Remark}[Theorem]{Remark}
\begin{document}
\title{Infinitely many periodic solutions to a Lorentz force equation with singular electromagnetic potential\thanks{Under the auspices of INdAM-GNAMPA, Italy.}
}
\author{A. Boscaggin, W. Dambrosio and D. Papini}
\date{}
\maketitle
\begin{abstract}
We consider the Lorentz force equation
$$
\frac{d}{dt}\left(\frac{m\dot{x}}{\sqrt{1-|\dot{x}|^{2}/c^{2}}}\right) =
q \left(E(t,x) + \dot x \times B(t,x)\right), \qquad x \in \mathbb{R}^3,
$$
in the physically relevant case of a singular electric field $E$. Assuming that $E$ and $B$ are $T$-periodic in time and satisfy suitable further conditions, we prove the existence of infinitely many $T$-periodic solutions. The proof is based on a min-max principle of Lusternik-Schrelmann type, in the framework of non-smooth critical point theory. Applications are given to the problem of the motion of a charged particle under the action of a Li\'enard-Wiechert potential and to the relativistic forced Kepler problem.
\end{abstract}
\medskip

\noindent
\textbf{Keywords:} Lorentz force equation, periodic solutions, non-smooth critical point theory, Lusternik-Schnirelmann category, Li\'enard-Wiechert potential, relativistic Kepler problem.

\noindent
{\bf AMS Subject Classification:} 34C25, 58E05, 58E30, 70H40, 78A35.

\section{Introduction}

According to the principles of electrodynamics \cite{Ja75}, the motion of a slowly accelerated charged particle 
under the influence of an electromagnetic field is ruled by the Lorentz force equation
\begin{equation}\label{eq:lorentz_intro}
\frac{d}{dt}\left(\frac{m\dot{x}}{\sqrt{1-|\dot{x}|^{2}/c^{2}}}\right) =
q \left(E(t,x) + \dot x \times B(t,x)\right), \qquad x \in \mathbb{R}^3,
\end{equation}
where $m$ is the mass of the particle, $q$ is its charge and $c$ is the speed of light; moreover, the electric and magnetic fields $E$ and $B$ are provided by the potentials $V$ and $A$ via the usual relations
\begin{equation} \label{eq-relelettricomagnetico}
E(t,x) = - \nabla_x V(t,x) - \partial_t A(t,x), \qquad B(t,x) = \textnormal{curl}_x A(t,x).
\end{equation}
As well known (see, for instance, \cite{FLS64}) equation \eqref{eq:lorentz_intro} is formally the Euler-Lagrange equation of the action functional
$$
\int_{0}^{T} mc^2 \left( 1 - \sqrt{ 1 - \frac{\vert \dot x(t) \vert^2}{c^2}}\right) \, dt+
\int_{0}^{T} q \left( - V(t,x(t))+ A(t,x(t)) \cdot \dot x(t) \right) \, dt.
$$
In spite of this, and probably due to the lack of smoothness of the kinetic part of the above functional, a systematic investigation 
of equation \eqref{eq:lorentz_intro} with the tools of critical point theory has been initiated only very recently. More precisely, in \cite{ArBeTo19,ArBeTo20} a rigorous variational formulation in the space $W^{1,\infty}$ is introduced, allowing for the use of non-smooth critical point theory in the version developed by Skulzkin \cite{Sz86}, and, as a consequence, several existence and multiplicity results are given for solutions of equation \eqref{eq:lorentz_intro} with either Dirichlet or periodic boundary conditions (see also \cite{GaTo20} for the use of topological techniques). However, in both the papers the physically relevant case of singular electric and magnetic fields is not taken into account. 

The aim of the present paper is to provide a contribution in this direction. To this end, we take advantage of a recent research \cite{BoDaPapp} dealing with the equation 
\begin{equation} \label{eq:piano}
\frac{d}{dt}\left(\frac{m\dot{x}}{\sqrt{1-|\dot{x}|^{2}/c^{2}}}\right) = - \nabla_x V(t,x), \qquad x\in \mathbb{R}^2,
\end{equation}
which is a version of \eqref{eq:lorentz_intro} in the plane with $A \equiv 0$ and $q =1$. 
More precisely, in \cite{BoDaPapp} equation \eqref{eq:piano} with a singular potential $V$ given by
$V(t,x)=-\alpha/|x|-U(t,x)$ (with $\alpha > 0$), is considered, namely 
\begin{equation} \label{eq:piano2}
\frac{d}{dt}\left(\frac{m\dot{x}}{\sqrt{1-|\dot{x}|^{2}/c^{2}}}\right) = - \alpha \frac{x}{\vert x \vert^3} + \nabla_x U(t,x), \qquad x\in \mathbb{R}^2.
\end{equation}
Let us point out that the motivation given in \cite{BoDaPapp} for the above equation was not coming from electrodynamics, but rather from relativistic celestial mechanics: indeed, equation \eqref{eq:piano2} is interpreted as a simple model, in special relativity, for the motion of a particle in a forced Kepler potential (see, for instance, \cite{AnBa71} as well as the references in \cite{BoDaFe22}). Of course, however, this can be of interest also in the context of electromagnetism and, actually, this interpretation is even more natural, since a rigorous treatment of the theory of gravitation should require the framework of general relativity: we refer to \cite[Problem 34.3]{Gr04} for an interesting discussion and comparison about Kepler and Coulomb problems from the relativistic point of view. 

By using minimization and min-max arguments in the framework of non-smooth critical point theory, it is proved in \cite{BoDaPapp} that, for any external perturbation $U$, non singular and $T$-periodic in time, equation \eqref{eq:piano2}
has infinitely many $T$-periodic solutions and, in particular, at least two $T$-periodic solutions of winding number $k$ around the origin, for any integer $k \neq 0$. Of course, such a result deeply relies on the presence of the singularity $ x= 0$ for the potential $V$, which produces a non-trivial topology for the domain of the action functional: the set of $T$-periodic paths winding $k$ times around the origin is nothing but a connected component of the domain, and each of them (but the one with $k = 0$) carries at least two periodic solutions of \eqref{eq:piano2}.
Let us emphasize the universal character of this result, meaning that no assumptions on $U$ (besides its smoothness) are needed: this is ultimately a  consequence of the fact that a periodic path $x$ winding around the origin with bounded velocity (since $|\dot x| < c)$ is a priori-bounded.

In this paper, we provide a sort of generalization of the result in \cite{BoDaPapp} applying to the Lorentz force equation \eqref{eq:lorentz_intro}. More precisely, we consider an electrostatic potential $V<0$ defined in a set $\Omega$ of the form
\begin{equation}\label{defomegaintro}
\Omega = \{ (t,x) \in \mathbb{R} \times \mathbb{R}^3 \, : \, x \neq r_j(t), \, \forall \ j=1,\ldots,N\},
\end{equation}
where the functions $r_1,\ldots,r_N: \mathbb{R} \to \mathbb{R}^3$ are $T$-periodic (for some $T > 0$), of class $C^1$ with 
$\Vert \dot r_i \Vert_\infty < c$ and such that 
$r_i(t) \neq r_j(t)$ for every $t \in [0,T]$ and $i \neq j$.
Moreover, we assume that $V$ has a Keplerian blow-up at the boundary of $\Omega$ (cf. assumption (V) in Section \ref{sec3}) and that the magnetic potential $A$ satisfies the global condition
\begin{equation} \label{eq:intro1}
|A(t,x)|\leq -\dfrac{\kappa'}{c}\, V(t,x),\quad \forall \ (t,x)\in \Omega,
\end{equation}
for some $\kappa'\in (0,1)$. Under these conditions, if both $A$, $V$ and their derivatives tend to zero at infinity, we prove that \eqref{eq:lorentz_intro} has infinitely many $T$-periodic solutions (cf. Theorem \ref{teo:main}). 

Let us point out that the structure of the singularities of $V$, described via the set $\Omega$, is modeled on the relevant case of Li\'enard-Wiechert potentials (cf. \cite{Ja75} and Section \ref{sec4}), corresponding to the motion of a charged particle under the effect of $N$ moving charged particles $q_1,\ldots , q_N$. In this situation, the functions $r_1,\ldots r_N$ are the motions laws of the particles generating the potentials and $V$ and $A$ are given by 
\[
V(t,x)=\sum_{i=1}^N \dfrac{q_i}{4\pi \varepsilon_0}\, \dfrac{1}{1-\eta_i(t_{i},x)\cdot \beta_i(t_i)}\, \dfrac{1}{\vert x-r_i(t_{i}) \vert},\qquad \beta_i(t)=\frac{\dot{r}_i(t)}{c},\ \eta_i(t,x)=\frac{x-r_i(t)}{|x-r_i(t)|},
\]
and
\[
A_{i}(t,x)=\sum_{i=1}^N \dfrac{\beta_i(t_i)}{c}\, V_i(t,x),
\]
where $t_i = t_i(t,x)$ is the so-called retarded time (see \eqref{eq:temporitardato} in Section \ref{sec4}). In particular, let us notice that condition \eqref{eq:intro1} is satisfied since $\Vert \dot r_i \Vert_\infty < c$, for every $i=1,\ldots, N$.

As a second application of our main result, going back to the relativistic celestial mechanics framework, we can prove the existence of infinitely many $T$-periodic solutions for the relativistic forced Kepler problem in the space
\[
\frac{d}{dt}\left(\frac{m\dot{x}}{\sqrt{1-|\dot{x}|^{2}/c^{2}}}\right) =
- \alpha \frac{x}{\vert x \vert^3} + \nabla_x U(t,x), \qquad x \in \mathbb{R}^3,
\]
when $U>0$ and $U \to 0$ for $\vert x \vert \to +\infty$ together with its gradient (cf. Theorem \ref{teo:MC} in Section \ref{sec4}). In particular, this provides a partial generalization of the result given in \cite{BoDaPapp} for the planar case mentioned above.

For the proof of Theorem \ref{teo:main} we use a variational approach, combining arguments from both \cite{ArBeTo19,ArBeTo20} and \cite{BoDaPapp}. More precisely, we consider the functional $I:W^{1,\infty}_T \to (-\infty,+\infty]$ defined as
\[
I(x)=\int_{0}^{T} c^2 \left( 1 - \sqrt{ 1 - \frac{\vert \dot x(t) \vert^2}{c^2}}\right) \, dt+
\int_{0}^{T} \left( - V(t,x(t))+ A(t,x(t)) \cdot \dot x(t) \right) \, dt,
\] 
whenever $x$ belongs to the subset $\Lambda \subset W^{1,\infty}_T$ of paths without collisions (that is, $(t,x(t)) \in \Omega$ for every 
$t \in [0,T]$, where $\Omega$ is as in \eqref{defomegaintro}) and $\Vert \dot x \Vert_\infty \leq c$, and extended to $+\infty$ otherwise.
This functional satisfies the structural assumption of Skulzin non-smooth critical point theory and its critical points give rise to $T$-periodic solutions of equation \eqref{eq:lorentz_intro}; moreover, it is well-behaved near collisions, in the sense that if $x_n$ 
approaches the boundary of $\Lambda$, then $I(x_n) \to +\infty$. These properties can be proved by using arguments already developed in \cite{ArBeTo19,ArBeTo20,BoDaPapp} and are collected in Lemma \ref{pro:assumptionI}. 

On the other hand, however, due to the three-dimensional setting, the approach of \cite{BoDaPapp} based on the winding number cannot be used and a different strategy to achieve both existence and multiplicity has to be developed. 
In particular, inspired by classical results available in the setting of classical mechanics \cite{ACZbook}, we detect periodic solutions via a min-max principle of Lusternik-Schnirelmann type. For this, two main issues have to be faced. On one hand, we prove that the functional $I$ satisfies a weak form of the Palais-Smale condition at any level $c > \inf I = 0$, cf. Lemma \ref{pro:ps}. On the other hand, we show that the proper domain of the action functional $I$ contains compact subsets of arbitrarily large category, allowing us to define the min-max levels 
\[
c_j = \inf_{A \in \mathcal{F}_j} \sup_{x \in A} I(x),\qquad j\in \mathbb{N},
\]
where $\mathcal{F}_j$ is the family of compact subsets of the domain of $I$ having category at least $j$, cf. Lemma \ref{pro:compatti}; moreover, $c_j > 0$ for any $j \geq 3$, cf. Lemma \ref{pro:livelli}. Then, taking advantage of the general min-max principle for non-smooth functionals proved in \cite{BoDaPapp} (cf. Theorem \ref{thm:vecchio} in Section \ref{sec2}), we can prove that for $j \geq 3$ the number $c_j$ is a critical level for the action functional. This would ensure the existence of infinitely many periodic solutions to equation \eqref{eq:lorentz_intro} provided a sequence of distinct critical levels $c_j$ exists, a fact which however seems hard to be established in general. Thus, adapting the arguments in the proof of \cite[Th. 1]{ArBeTo20} we prove that whenever two critical levels coincide, the corresponding critical level carries infinitely many critical points. From this, we deduce that the functional $I$ has infinitely many critical points.

To the best of our knowledge, a technique of this type seems to be completely new in a non-smooth setting and we think that the general Lusternik-Schnirelmann min-max principle we introduce can be of independent interested.

The plan of the paper is the following. In Section \ref{sec2}, we describe the abstract variational setting and we provide the non-smooth min-max principle of Lusternik-Schnirelmann type (Theorem \ref{thm:minmax}). 
In Section \ref{sec3} we state and prove our main result (Theorem \ref{teo:main}).
Finally, in Section \ref{sec4} we provide the above mentioned applications: the motion of a charged particle under the influence of periodic Li\'enard-Wiechert potentials (Theorem \ref{teo:LW}) and the perturbed relativistic Kepler problem (Theorem \ref{teo:MC}).

\section{An abstract result}\label{sec2}

In this section, we present a result on the existence of infinitely many critical points for non-smooth functionals with singularities. More precisely, as in \cite{BoDaPapp} we are concerned with functionals of the form described in the following assumption.

\begin{Assumption}\label{ass:I}
	$ I:X\to (-\infty,+\infty] $ is a functional which can be decomposed as
	\[
	I(x)=\psi(x)+\Phi(x), \quad \forall \ x\in X,
	\]
	where, denoting by $D_{\psi}=\{x\in X: \psi(x)<+\infty\}$ and $D_{\Phi}=\{x\in X: \Phi(x)<+\infty\}$,
	\begin{enumerate}
		\item
		$D_{\Phi}$ is open in $X$ and $D_{I}=D_{\psi}\cap D_{\Phi}\ne\emptyset;$
		\item
		$\psi:X\to\RR\cup\{+\infty\} $ is convex and lower semi-continuous;
		moreover, $\psi$ is continuous on any nonempty compact set $A\subset X$ such that $ \sup_{A} \psi $ is finite;
		\item
		$\Phi:X\to\RR\cup\{+\infty\} $ is locally Lipschitz continuous in $D_{\Phi}$, i.e. every $x\in D_{\Phi}$ has a neighborhood in which $\Phi$ is Lipschitz continuous;
		\item for any sequence $\{x_n\}$ in $D_{I}$ such that $\dist(x_n, \partial D_{\Phi}) \to 0$, it holds that
		$I(x_n) \to +\infty$.
	\end{enumerate}
\end{Assumption}
\noindent
We now recall some basic definitions from \cite[\S 3.2]{MoPa99}. 
\begin{Definition}\label{def:puntocritico}
	Let $ I : X\to (-\infty,+\infty] $ satisfy Assumption~\ref{ass:I}.
	\begin{enumerate}
		\item
		A point $x \in D_{I}$ is a \emph{critical point} of $I$ if
		\[
		\Phi^{0}(x;z-x) + \psi(z) - \psi(x) \ge 0, \quad \forall \, z \in X,
		\]
where
\[
\Phi^{0}(x;u)\coloneqq\limsup_{w\to x, t\to 0^{+}}\frac{\Phi(w+tu)-\Phi(w)}{t}.
\]
		\item
		A \emph{Palais-Smale} (abbreviated \emph{PS-}) \emph{sequence for $I$ at level $c$}
		is a sequence $\{x_{n}\}$ in $X$ such that $I(x_{n})\to c $ and
		\begin{equation}\label{PS-formula}
		\Phi^{0}(x_{n};z-x_{n})+\psi(z)-\psi(x_{n}) \ge-\epsilon_{n}\|z-x_{n}\|, \quad \forall \, n\in\NN,\, z\in X,
		\end{equation}
		for some sequence $\epsilon_{n}\to 0^{+}$.
	\end{enumerate}
\end{Definition}
\begin{Remark}\label{rem:loclip}
The functional $\Phi$ in Section~\ref{sec3} is actually of class $C^{1}$ in its domain $D_{\Phi}$
and, thus,
\[
\Phi^{0}(x;u)=d\Phi(x)[u].
\]
We decided to present this more abstract section in the setting of nonsmooth calculus since, on one hand, the assumption $\Phi\in C^{1}(D_{\Phi})$ doesn't really simplify the argument and, on the other,
locally Lipschitz functionals immediately appear as soon as one considers some truncation of a $C^{1}$ functional.
\end{Remark}
We also need to consider the following weak form of the Palais-Smale condition, as introduced in \cite{ArBeTo19}.

\begin{Definition}\label{def:weakPS}
	Let $ I : X\to (-\infty,+\infty] $ satisfy Assumption~\ref{ass:I} and assume that there exists a Banach space $Y$ such 
	that $X \subset Y$ with continuous embedding. The functional $I$ is said to satisfy the \emph{weak Palais-Smale condition at level $c$} if for every PS-sequence $\{x_{n}\}$ in $X$ such that $I(x_{n})\to c $, there exist $x \in X$ and a subsequence $\{x_{n_k}\}$ such that 
	$x$ is a critical point of $I$ with $I(x) = c$ and $x_{n_k} \to x$ in the $Y$-topology.
\end{Definition}

The existence of infinitely many critical points for a functional of the form $I$ is obtained 
using a general non-smooth min-max principle, together with the Lusternik-Schnirelmann category.
For the readers convenience, we recall here the definition and basic properties of the category (cf. \cite{ACZbook}) and the min-max principle we use (established in \cite{BoDaPapp} as a generalization of \cite[Theorem~3.1]{LiMa04}).

Given $M\subset X$, the category of $A\subset M$ relative to $M$, denoted by $\textrm{cat}_X (A,M)$ is the least integer $k$, if it exists, such that 
\[
A\subset A_1\cup \ldots \cup A_k,
\]
where $A_i\subset M$ is closed and contractible in $M$, for every $i=1,\ldots, k$. The category is infinite if such a least integer does not exist.
The category satisfies the following properties, which will be used in many situations:

\begin{itemize}
\item[(P1)] if $A\subset B\subset M$, then $\cat_X (A,M)\leq \cat_X (B,M)$;
\item[(P2)] if $A\subset M\subset N$, then $\cat_X (A,N)\leq \cat_X (A,M)$;
\item[(P3)] if $A,B\subset M$, then $ \cat_{X}(A\cup B,M)\le\cat_{X}(A,M)+\cat_{X}(B,M) $;
\item[(P4)] if $A\subset M$ is closed and $\varphi \in C(A,M)$ is a deformation (i.e. it is homotopic to the inclusion $\iota_A:A\to M$), then $\cat_X (A,M)\leq \cat_X (\varphi(A),M)$.
\end{itemize}

\begin{Theorem}\cite[Th. 2.4]{BoDaPapp} \label{thm:vecchio}
	Let $I=\psi+\Phi$ be a functional satisfying Assumption~\ref{ass:I}, let $B$ be a closed set in $X$ and $\mathcal{F}$ be a family of compact sets in $X$ such that:
	\begin{itemize}
		\item[(1)] 
		$\mathcal{F}$ is \emph{homotopy stable with extended boundary} $B$, that is, for each
		$ A \in \mathcal{F} $ and each continuous deformation $ \eta\in C^{0}([0,1]\times X, X) $ such that
		$$ \eta(t,x)=x, \quad \forall \ (t,x)\in (\{0\}\times X)\cup ([0,1]\times B)  \qquad \text{ and } \qquad
		\eta([0,1]\times A)\subset D_{\Phi},$$ 
		one has that
		$ \eta(\{1\}\times A)\in\mathcal{F} $;
		\item[(2)]
		$ c \vcentcolon= \adjustlimits\inf_{A\in\mathcal{F}}\sup_{x\in A} I(x) < +\infty $;
		\item[(3)]
		there exists a closed set $F$ in $X$ such that
		\[
		(A\cap F)\setminus B \ne \emptyset, \quad \forall \ A\in\mathcal{F} \qquad\text{and}\qquad
		\sup_{B} I \le \inf_{F} I.
		\]
	\end{itemize}
	Then, for any sequence $\{A_{n}\}$ in $\mathcal{F}$ such that
	$\adjustlimits \lim_{n\to\infty}\sup_{A_{n}} I = c$, there exists a PS-sequence $\{x_{n}\}\subset X$
	at level $c$ such that $ \dist(x_{n},A_{n})\to 0 $.
	If moreover $ \inf_F I = c $, then also $\dist(x_{n},F)\to 0$.
\end{Theorem}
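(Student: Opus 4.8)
I would prove this min-max principle by Ekeland's variational principle applied directly on the family $\mathcal{F}$ (which the use of the Hausdorff distance $\dH$ suggests), rather than by a global pseudo-gradient flow. First I would introduce $G\colon\mathcal{F}\to(-\infty,+\infty]$, $G(A)=\sup_{x\in A}I(x)$. Since $\psi$ is lower semicontinuous and $\Phi$ is continuous on the open set $D_\Phi$, the functional $I$ is lower semicontinuous, and a direct check shows that $G$ is then lower semicontinuous for $\dH$; by (2) it is bounded below with $\inf_{\mathcal F}G=c<+\infty$. Writing $\epsilon_n:=G(A_n)-c\to 0$ for the given minimizing sequence, I would apply Ekeland's principle with parameter $\sqrt{\epsilon_n}$ on a complete metric subspace of $(\mathcal{F},\dH)$ — restricting to sets contained in the sublevel $\{I\le c+1\}$, which by Assumption~\ref{ass:I}(4) stay at positive distance from $\partial D_\Phi$ — to produce $\bar A_n\in\mathcal F$ with $G(\bar A_n)\le G(A_n)$, $\dH(\bar A_n,A_n)\le\sqrt{\epsilon_n}$, and the minimality property $G(A)>G(\bar A_n)-\sqrt{\epsilon_n}\,\dH(A,\bar A_n)$ for every $A\in\mathcal F$, $A\ne\bar A_n$.

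Next I would extract the Palais-Smale point from $\bar A_n$. As $G(\bar A_n)\le c+1$, the compact set $\bar A_n$ lies in $D_I$, so $\sup_{\bar A_n}\psi<+\infty$; by Assumption~\ref{ass:I}(2), $\psi$ is then continuous on $\bar A_n$, hence $I$ attains its maximum on the compact set $M_n:=\{x\in\bar A_n:I(x)=G(\bar A_n)\}$. I claim there is $x_n\in\bar A_n$ with $I(x_n)\to c$ satisfying \eqref{PS-formula} with some $\epsilon_n'\to 0$. I would argue by contradiction: if, for a fixed $\delta>0$, every near-maximizer admitted a direction $z$ with $\Phi^{0}(x;z-x)+\psi(z)-\psi(x)\le-\delta\|z-x\|$, then, selecting such directions into a locally Lipschitz field on a neighborhood of the compact set $M_n$ and estimating the decrease of $\psi$ through its convexity along the generated segments, I would build a short deformation $\eta_s$ supported near $M_n$, fixing $B$ (legitimate since $\sup_B I\le\inf_F I\le c\le G(\bar A_n)$ keeps $B$ below the maximizers, off which the field is supported) and keeping its image in $D_\Phi$ (by Assumption~\ref{ass:I}(4)). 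Homotopy stability (1) would give $A':=\eta_s(\bar A_n)\in\mathcal{F}$ while $G(A')<G(\bar A_n)-\sqrt{\epsilon_n}\,\dH(A',\bar A_n)$ for small $s$ and large $n$, contradicting the minimality of $\bar A_n$. This yields the required PS-sequence at level $c$, with $\dist(x_n,A_n)\le\dH(\bar A_n,A_n)\le\sqrt{\epsilon_n}\to 0$.

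For the final assertion I would assume $\inf_F I=c$ and use condition (3): every $A\in\mathcal F$ meets $F$ off $B$ at a point of level $\ge\inf_F I=c\ge\sup_B I$, so up to the $\sqrt{\epsilon_n}$ error the maximizers of $I$ on $\bar A_n$ concentrate on $F$. Repeating the perturbation argument with the descent field supported near $(\bar A_n\cap F)\setminus B$ forces the selected $x_n$ to lie within $\dH(\bar A_n,A_n)+o(1)$ of $F$, giving $\dist(x_n,F)\to 0$.

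I expect the main obstacle to be the deformation built in the second step. In the non-smooth setting there is no gradient flow, so the descent field must be assembled from the generalized directional derivative $\Phi^{0}$, and the decrease of the merely lower-semicontinuous, convex $\psi$ must be controlled through convexity along segments rather than by differentiation. Two structural features are decisive here: Assumption~\ref{ass:I}(4) confines bounded-level trajectories away from the singular boundary $\partial D_\Phi$, so that the deformation is globally defined and the admissibility requirement $\eta([0,1]\times A)\subset D_\Phi$ in (1) is genuinely met; and the continuity of $\psi$ on compacta from Assumption~\ref{ass:I}(2) guarantees both that $G(\bar A_n)$ is attained and that the near-maximizers form a compact set on which a localized field can be constructed. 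A further technical point, to be settled at the outset, is the choice of a complete subspace of $(\mathcal{F},\dH)$ on which Ekeland's principle applies while the perturbations used keep one inside $\mathcal{F}$.
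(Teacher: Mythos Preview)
The paper does not prove Theorem~\ref{thm:vecchio}: it is quoted from \cite[Th.~2.4]{BoDaPapp} and used as a black box (see the sentence preceding the statement and the immediate transition afterwards to Theorem~\ref{thm:minmax}). There is thus no in-paper proof to compare your proposal against.

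That said, your outline is the expected one and is visibly consistent with the machinery the authors themselves deploy in the proof of Theorem~\ref{thm:minmax}(a2): Ekeland's principle on a family of compacta equipped with the Hausdorff metric, descent directions extracted from the failure of \eqref{PS-formula}, convexity of $\psi$ to control its variation along segments, and Lebourg's mean-value theorem for the locally Lipschitz part $\Phi$. Since \cite{BoDaPapp} is by the same authors and Theorem~\ref{thm:vecchio} is billed as a generalization of \cite[Theorem~3.1]{LiMa04}, your strategy almost certainly matches the original.

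The one genuine soft spot is the point you yourself flag last: completeness of the metric space on which Ekeland is invoked. Restricting to $\{A\in\mathcal F: A\subset\{I\le c+1\}\}$ does not obviously give a complete space, because homotopy stability says nothing about closure under Hausdorff limits. The standard remedy (and what the argument in the proof of (a2) here implicitly relies on) is to apply Ekeland on the complete metric space of \emph{all} nonempty compact subsets of $X$, where $\Pi(A)=\sup_A I$ is lower semicontinuous; the minimality inequality is then tested only against competitors $D$ that are images of $\bar A_n$ under admissible deformations, which lie in $\mathcal F$ by hypothesis~(1) and hence satisfy $\sup_D I\ge c$. You never need $\bar A_n\in\mathcal F$, only $\sup_{\bar A_n}I\ge c$, and that follows from $\dH(\bar A_n,A_n)\to 0$ together with lower semicontinuity. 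With this adjustment your sketch goes through.
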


\noindent
We are now in a position to state our result. For every integer $j\in \mathbb{N}$, let us define
\begin{equation} \label{eq:defcompatti}
\mathcal{F}_j=\{A \subset D_\Phi \, : \, A \mbox{ compact }, \, \textrm{cat}_X (A,D_\Phi) \geq j\}.
\end{equation}
Moreover, let 
\begin{equation} \label{eq:livelli}
c_j = \inf_{A \in \mathcal{F}_j} \sup_{x \in A} I(x),
\end{equation}
for every $j\in \mathbb{N}$ such that $\mathcal{F}_j$ is not-empty. Then, we are able to prove the following result.

\begin{Theorem}\label{thm:minmax}
	Let $I=\psi+\Phi$ be a functional satisfying Assumption~\ref{ass:I}
	and the weak Palais-Smale condition at each level $c > \inf I$.
	Moreover, let us assume that there exists $j_0\in \mathbb{N}$ such that
\begin{itemize}
	\item[(i)] $\mathcal{F}_j\neq \emptyset$, for every $j\geq j_0$
	\item[(ii)] $c_j < +\infty$ for every $j \geq j_0$
	\item[(iii)] $c_{j_0} >\inf I$.
\end{itemize}
Then, the functional $I$ has infinitely many critical points. More precisely:
\begin{itemize}
	\item[(a1)] $c_j$ is a critical level of $I$, for every $j\geq j_0$
	\item[(a2)] whenever $c_{j_1} = c_{j_2}$ for some $j_2 > j_1 \geq j_0$, then the functional $I$ has infinitely many critical points at level $c_{j_1}$.
\end{itemize}
\end{Theorem}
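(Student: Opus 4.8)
The plan is to deduce both conclusions from the abstract min--max principle of Theorem~\ref{thm:vecchio}, applied to the families $\mathcal{F}_j$, with the Lusternik--Schnirelmann category playing the role of a topological index through properties (P1)--(P4); the weak Palais--Smale condition then upgrades the PS--sequences produced by Theorem~\ref{thm:vecchio} into genuine critical points. Throughout I will use that $j\mapsto c_j$ is nondecreasing: since $\mathcal{F}_{j+1}\subset\mathcal{F}_j$, one has $c_{j+1}\ge c_j$, so hypotheses (ii)--(iii) give $+\infty>c_j\ge c_{j_0}>\inf I$ for every $j\ge j_0$.

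To prove (a1), I would fix $j\ge j_0$ and apply Theorem~\ref{thm:vecchio} with $\mathcal{F}=\mathcal{F}_j$, extended boundary $B=\emptyset$ and $F=X$. The only nontrivial point is homotopy stability of $\mathcal{F}_j$: if $A\in\mathcal{F}_j$ and $\eta$ is an admissible deformation with $\eta([0,1]\times A)\subset D_\Phi$, then $\eta(1,\cdot)|_A\colon A\to D_\Phi$ is continuous and is homotopic in $D_\Phi$ to the inclusion $\iota_A$ (via $s\mapsto\eta(s,\cdot)|_A$, which stays in $D_\Phi$ by assumption), so property (P4) yields $\cat_X(\eta(1,A),D_\Phi)\ge\cat_X(A,D_\Phi)\ge j$; as $\eta(1,A)$ is compact, it lies in $\mathcal{F}_j$. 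Condition (2) is exactly (ii), while condition (3) is trivial for $B=\emptyset$, $F=X$, since $\sup_\emptyset I=-\infty$ and every $A\in\mathcal{F}_j$ is nonempty. Choosing a minimizing sequence $\{A_n\}\subset\mathcal{F}_j$ with $\sup_{A_n}I\to c_j$, Theorem~\ref{thm:vecchio} produces a PS--sequence at level $c_j$; since $c_j>\inf I$, the weak Palais--Smale condition of Definition~\ref{def:weakPS} furnishes a critical point $x$ with $I(x)=c_j$, so $c_j$ is a critical level.

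For (a2) set $c\coloneqq c_{j_1}=c_{j_2}$; monotonicity forces $c_j=c$ for all $j_1\le j\le j_2$, in particular $c_{j_1}=c_{j_1+1}=c$ (here $j_1+1\le j_2$ is where the hypothesis $j_2>j_1$ enters). I would argue by contradiction, assuming the critical set at level $c$, namely $K_c\coloneqq\{x\in D_I:\ x\text{ is critical and }I(x)=c\}$, is finite, say $K_c=\{p_1,\dots,p_m\}$. Adapting the descent argument of \cite[Th.~1]{ArBeTo20}, the goal is to show that finitely many critical points can be circumvented, producing a compact set of category at least $j_1+1$ lying strictly below level $c$, contradicting $c_{j_1+1}=c$. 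Concretely, I would first fix disjoint closed balls $\overline B_i\subset D_\Phi$ centred at the $p_i$ and, using that all weak Palais--Smale limits at level $c$ belong to $K_c$, prove a localized deformation lemma: there exist $\varepsilon>0$ and a deformation $\zeta$ with values in $D_\Phi$, homotopic to the identity, carrying $\{x:\ I(x)\le c+\varepsilon\}\setminus U$ into $\{x:\ I(x)\le c-\varepsilon\}$, where $U\coloneqq\bigcup_i B_i$. Then, starting from a near--optimal $A\in\mathcal{F}_{j_1+1}$ with $\sup_A I<c+\varepsilon$, I would exploit the infinite dimensionality of $X$ to deform $A$ off the neighbourhood $U$ of the finite set $K_c$: each $\overline B_i$ is convex, hence contractible in $D_\Phi$, so via (P1)--(P4) the finiteness of $K_c$ should allow an avoidance realized by a deformation homotopic to the inclusion that retains category at least $j_1+1$. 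Applying $\zeta$ afterwards sends the surviving set into $\{I\le c-\varepsilon\}$, yielding $c_{j_1+1}\le c-\varepsilon<c$, a contradiction. Hence $K_c$ is infinite.

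The main obstacle is the construction, under only the \emph{weak} Palais--Smale condition, of the descending deformation $\zeta$ that crosses level $c$ while avoiding a neighbourhood of $K_c$. In the classical smooth setting this follows from a pseudo--gradient flow together with compactness of $K_c$ in $X$; here compactness is available only in the weaker $Y$--topology and $\Phi$ is merely locally Lipschitz on the open set $D_\Phi$, so the flow must be built from the Clarke subdifferential appearing in Definition~\ref{def:puntocritico} and kept uniformly away from the singular boundary $\partial D_\Phi$, using clause~(4) of Assumption~\ref{ass:I} to ensure that trajectories along which $I$ stays bounded do not escape $D_\Phi$. Reconciling this descent with the infinite--dimensional avoidance of $K_c$, so that the category bookkeeping genuinely preserves a family of category $j_1+1$ below level $c$, is the delicate point and is precisely where the technique of \cite{ArBeTo20} has to be adapted to the present nonsmooth framework.
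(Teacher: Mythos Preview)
Your argument for (a1) is correct and matches the paper's proof exactly: same choice $B=\emptyset$, $F=X$, $\mathcal{F}=\mathcal{F}_j$, same verification of homotopy stability via (P4), same appeal to the weak Palais--Smale condition.

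For (a2), however, there is a genuine gap, and the paper's route is quite different from what you sketch. You propose building a global deformation lemma of classical type---a pseudo-gradient-like descent $\zeta$ carrying $\{I\le c+\varepsilon\}\setminus U$ into $\{I\le c-\varepsilon\}$---and you correctly identify that constructing such a flow under only the \emph{weak} PS-condition, with $\Phi$ merely locally Lipschitz and singular on $\partial D_\Phi$, is problematic. You do not resolve this obstacle; the proof as written is a plan rather than an argument.

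The paper avoids this difficulty entirely. Instead of a flow, it applies Ekeland's variational principle to the lower-semicontinuous map $\Pi(A)=\sup_A I$ on the complete metric space $(\mathcal{F}_{j_1},\dH)$. Starting from $A\in\mathcal{F}_{j_2}$ with $\sup_A I\le c+\epsilon$, one removes $Y$-balls around the (finitely many, by contradiction) critical points to obtain $B=A\setminus N_{2r}\in\mathcal{F}_{j_1}$ via (P3); Ekeland then yields $C\in\mathcal{F}_{j_1}$ with $\dH(B,C)<r$ and the approximate minimality inequality. Since $C$ avoids $N_r$, the negation of the weak PS-condition gives, for each $x$ in the top slice $S\subset C$, a descent direction $\xi_x$. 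A \emph{single explicit} deformation $\beta_t(x)$ is then built as a convex combination of $x$ and finitely many $\xi_{y_k}$ (via a partition of unity on $S$); convexity of $\psi$ controls the $\psi$-part, Lebourg's mean-value theorem controls the $\Phi$-part, and one obtains $D=\beta_1(C)\in\mathcal{F}_{j_1}$ with $\max_D I<\max_C I-\delta\sqrt{\epsilon}$, contradicting Ekeland's inequality. No flow, no global deformation lemma, and the weak PS-condition is used only to produce the local descent directions $\xi_x$.
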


\begin{proof} \textit{(a1)} Let us fix $j\geq j_0$. We claim that the assumptions of Theorem \ref{thm:vecchio} with $B=\emptyset$, $F=X$ and $\mathcal{F}=\mathcal{F}_j$ are satisfied. Indeed, assumption (1) is a consequence of property (P4) of the category. Moreover, assumption (2) is guaranteed by (ii) and assumption (3) is trivially fulfilled since 
	\[
	(A\cap F)\setminus B =A \ne \emptyset, \quad \forall \ A\in\mathcal{F}_j
	\]
	and
	\[
	B=\emptyset \quad \Longrightarrow \quad \sup_{B} I=-\infty.
	\]
Hence, we can apply Theorem \ref{thm:vecchio} to obtain the existence of a PS-sequence at the level $c_j\geq c_{j_0}>\inf I$. Since $I$ satisfies the weak Palais-Smale condition at levels greater than $\inf I$, we deduce that there exists a critical point at level $c_j$.
\medskip

\noindent
\textit{(a2)} The argument here follows closely the one in \cite[Theorem~1]{ArBeTo20}, with three main changes.
The first one is that our functional $\Phi$ is singular and locally Lipschitz continuous, instead of being even and in  $C^{1}(X)$. 
The second is that we use the Lusternik-Schnirelmann category instead of the Krasnoselskii genus, a fact which is, however, linked to the first difference.
Finally, we use item 2 in Assumption \ref{ass:I}, which is weaker than the continuity of $\psi$ on its proper domain $D_\psi$ required in \cite{ArBeTo20}. 

Concerning the weak PS-condition, we denote $\|\cdot\|_{Y}$ the norm in $Y$ (see Definition~\ref{def:weakPS}) and we set $B_{Y}(x,r)=\{u\in X: \|u-x\|_{Y}<r\}$ which is open in $X$ also w.r.t. the stronger topology induced by $\|\cdot\|$.

By contradiction, let us assume that $I$ has only $n\in\NN$ critical points at level $c\coloneqq c_{j_{1}}=c_{j_{2}}$, which we label $x_{1},\dots,x_{n}$, and let $r>0$ be such that the sets
$\overline{B_{Y}(x_{m},2r)}$ are pairwise disjoint and contained in $D_{\Phi}$ (the closure is taken w.r.t. the norm $\|\cdot\|$ of $X$, if not otherwise specified).
We define
\[
N_{\rho}=B_{Y}(x_{1},\rho)\cup\dots\cup B_{Y}(x_{n},\rho),\quad \forall \,\rho>0,
\]
and observe that, arguing by contradiction and using the weak PS-condition, there exists $\epsilon\in(0,r^{2})$ such that, for each $x\in I^{-1}([c-\epsilon,c+\epsilon])\setminus N_{r}$, there is $\xi_{x}\ne x$ such that
\begin{equation}\label{eq:discesa}
\psi(\xi_{x})-\psi(x)+\Phi^{0}(x,\xi_{x}-x)<-\sqrt{\epsilon}\|\xi_{x}-x\|.
\end{equation}
Let $A\in\mathcal{F}_{j_{2}}$ be chosen in such a way that
\[
\sup_{A} I \le c+\epsilon.
\]
In particular, $A\subset D_{\phi}$ and $\sup_{A}I=\max_{A}I$ since $\psi$ is bounded on $A$ and, thus, continuous in $A$ by Assumption~\ref{ass:I}.
The set $B=A\setminus N_{2r}$ is compact in $X$ and $B\in\mathcal{F}_{j_{1}}$, since
\[
j_{1}<j_{2} \le \cat_{X}(A,D_{\Phi}) \le \cat_{X}(B,D_{\Phi})+
\cat_{X}\left(\bigcup_{m=1}^{n}B_{Y}(x_{m},2r),D_{\Phi}\right) = \cat_{X}(B,D_{\Phi})+1
\]
 by (P3).
As a consequence we have
\[
c \le \max_{B}I \le \max_{A}I \le c+\epsilon.
\]
We apply Ekeland variational principle (see also \cite[Lemma~1(iii)]{ArBeTo20}), to the map $\Pi:\mathcal{F}_{j_{1}}\to\left]-,\infty+\infty\right]$ such that $\Pi(A)=\sup_{A}I$, since $\mathcal{F}_{j_{1}}$ is complete w.r.t. the Hausdorff metric
\[
\dH(A,B)=\max\{\sup_{a\in A}\dist(a,B);\sup_{b\in B}\dist(b,A)\}
\]
and $\Pi$ is lower semi-continuous w.r.t. the same matric.
Then, we obtain  $C\in\mathcal{F}_{j_{1}}$ such that
\begin{equation}\label{eq:Ekeland}
\begin{aligned}
\max_{C}I   & \le  \max_{B}I \\
\dH(B,C) & \le  \sqrt{\epsilon} < r \\
\sup_{D}I   & \ge  \max_{C}I-\sqrt{\epsilon}\delta(C,D), \quad\forall\ D\in\mathcal{F}_{j_{1}},
\end{aligned}
\end{equation}
where  is the Hausdorff distance for compact sets of a metric space.
In particular,
$C\cap N_{r}=\emptyset$ and the set $S=\{x\in C:c-\epsilon\le I(x)\}$ is contained
in $ I^{-1}([c-\epsilon,c+\epsilon])\setminus N_{r} $ and is compact in $X$.

Since the mapping
\[
(x_{1},x_{2})\mapsto \phi(\xi_{x})-\phi(x_{1})+\Phi^{0}(x_{2};\xi_{x}-x_{2})+\sqrt{\epsilon}\|\xi_{x}-x_{1}\|
\]
is upper semi-continuous in $X\times D_{\Phi}$ (by \cite[Proposition~2.1.1]{Clarke90}) and negative for $x_{1}=x_{2}=x\in S$ by \eqref{eq:discesa}, for each $x\in S$ there is a positive $\delta_{x}<\|\xi_{x}-x\|$ such that
$\overline{B_{X}(x,\delta_{x})}\subset D_{\Phi}$ and
\[
\phi(\xi_{x})-\phi(u)+\Phi^{0}(x+h;\xi_{x}-u)<-\sqrt{\epsilon}\|\xi_{x}-u\|,
\quad \forall\ u\in \overline{B_{X}(x,\delta_{x})}, \, h\in \overline{B_{X}(0,\delta_{x})}.
\]
Since $S$ is compact, there exist $y_{1},\dots,y_{\ell}\in S$ such that $S\subset B_{1}\cup\dots\cup B_{\ell}$, where $B_{k}=B_{X}(y_{k},\delta_{y_{k}})$, $1\le k \le \ell$.
We observe that $\xi_{y_{k}}\not\in B_{k}$, by construction, and, thus, we can fix some positive
$\delta \le \min\{\delta_{C}/2,\delta_{y_{k}},\dist(\xi_{y_{k}},\overline{B}_{k}\cap C): 1\le k\le \ell \}$, where $\delta_{C}\coloneqq\min\{\dist(x,\partial D_{\Phi}):x\in c\} >0$ since
$C\subset D_{\Phi}$ by \eqref{eq:Ekeland}.

Let us denote by $\eta,\eta_{k}:C\to[0,1]$ ($1\le k\le\ell$) continuous functions such that
\[
\eta(x)=\begin{cases}
1 &\text{if } I(x)\ge c \\
0 &\text{if } I(x)\le c-\epsilon
\end{cases}\quad\text{and}
\quad
\eta_{k}(x)=\begin{cases}
\dfrac{\dist(x,C\setminus B_{k})}{\sum_{m=1}^{\ell}\dist(x,C\setminus B_{m})} &\text{if }x\in B_{k}\cap C \vspace{4pt}\\
0 &\text{if } x\in C\setminus B_{k}
\end{cases}
\]
so that $\sum_{k=1}^{\ell}\eta_{k}=1$ on $S$.
Let us consider the function $\beta: [0,1]\times C \to X$ defined by
\[
\begin{aligned}
\beta(t,x)=\beta_{t}(x) &\coloneqq x+t\delta\eta(x)
\sum_{k=1}^{\ell}\frac{\eta_{k}(x)}{\|\xi_{y_{k}}-x\|}(\xi_{y_{k}}-x) \\
&= \left[ 1-t\delta\eta(x)\sum_{k=1}^{\ell}\frac{\eta_{k}(x)}{\|\xi_{y_{k}}-x\|}\right]x
+t\delta\eta(x)\sum_{k=1}^{\ell}\frac{\eta_{k}(x)}{\|\xi_{y_{k}}-x\|}\xi_{y_{k}}
\end{aligned}
\]
which is continuous and satisfies $\|\beta_{t}(x)-x\|\le\delta<\delta_{C}$ for all $(t,x)\in[0,1]\times C$  by construction.
As a consequence $\beta_{1}$ is a deformation of $C$ in $D_{\Phi}$ (observe that $\beta_{0}$ is the identity on $C$) and $D\coloneqq\beta_{1}(C)$ belongs to $\mathcal{F}_{j_{1}}$ by property (P4).

From the estimate
\[
t\delta\eta(x)\sum_{k=1}^{\ell}\frac{\eta_{k}(x)}{\|\xi_{y_{k}}-x\|}
\le \delta\sum_{k=1}^{\ell}\frac{\eta_{k}(x)}{\dist(\xi_{y_{k}},\overline{B}_{k}\cap C)}
\le \sum_{k=1}^{\ell}\eta_{k}(x)\le 1,
\]
we deduce that
$\beta_{t}(x)$ is a convex combination of $x,\xi_{y_{1}},\dots,\xi_{y_{\ell}}$ and, hence,
\[
\psi(\beta_{t}(x))\le
\left[1-t\delta\eta(x)\sum_{k=1}^{\ell}\frac{\eta_{k}(x)}{\|\xi_{y_{k}}-x\|}\right]\psi(x)
+t\delta\eta(x)\sum_{k=1}^{\ell}\frac{\eta_{k}(x)}{\|\xi_{y_{k}}-x\|}\psi(\xi_{y_{k}}).
\]
On the side of $\Phi$, by Lebourg's theorem \cite[Theorem~2.3.7]{Clarke90} for each $x\in C$ there
exists $\tau=\tau(x)\in(0,1)$ and $\zeta\in\partial\Phi(\beta_{\tau}(x))$ such that $\Phi(\beta_{1}(x))-\Phi(x)=\langle \zeta,\beta_{1}(x)-x\rangle $, where $\partial\Phi(x)$
is the generalized gradient of $\Phi$ at $x$ (see \cite[\S 2.1]{Clarke90}).
Hence, we have
\[
\Phi(\beta_{1}(x))-\Phi(x)\le \Phi^{0}(\beta_{\tau}(x);\beta_{1}(x)-x) \le
\delta\eta(x)\sum_{k=1}^{\ell}\frac{\eta_{k}(x)}{\|\xi_{y_{k}}-x\|}
\Phi^{0}(\beta_{\tau}(x);\xi_{y_{k}}-x)
\]
by \cite[Propositions~2.1.1-2]{Clarke90}.
As a consequence we can estimate
\[
\begin{aligned}
I(\beta_{1}(x)) &\le  \left[1-\delta\eta(x)\sum_{k=1}^{\ell}\frac{\eta_{k}(x)}{\|\xi_{y_{k}}-x\|}\right]\psi(x)
+\delta\eta(x)\sum_{k=1}^{\ell}\frac{\eta_{k}(x)}{\|\xi_{y_{k}}-x\|}\psi(\xi_{y_{k}}) \\
&\hphantom{\quad\;} 
+\Phi(x)+\delta\eta(x)\sum_{k=1}^{\ell}\frac{\eta_{k}(x)}{\|\xi_{y_{k}}-x\|}
\Phi^{0}(\beta_{\tau}(x);\xi_{y_{k}}-x) \\
&= I(x)+\delta\eta(x)\sum_{k=1}^{\ell}\frac{\eta_{k}(x)}{\|\xi_{y_{k}}-x\|}
\left[ \psi(\xi_{y_{k}})-\psi(x)+\Phi^{0}(\beta_{\tau}(x);\xi_{y_{k}}-x)\right].
\end{aligned}
\]
Now, by construction, $\|\beta_{\tau}(x)-x\|\le\delta\le\delta_{y_{k}}$ for all $k=1,\dots,\ell$,
which implies that
\[
\psi(\xi_{y_{k}})-\psi(x)+\Phi^{0}(\beta_{\tau}(x);\xi_{y_{k}}-x)<-\sqrt{\epsilon}\|\xi_{y_{k}}-x\|,
\quad\forall\ x\in\overline{B}_{k}, \,\ k=1,\dots,\ell.
\]
Therefore, we have
\[
I(\beta_{1}(x))<I(x)-\delta\eta(x)\sqrt{\epsilon}\sum_{k=1}^{\ell}\eta_{k}(x)
< I(x)-\delta\eta(x)\sqrt{\epsilon} \quad\forall\ x\in S.
\]
On the other hand, if $x\in C\setminus S$, we have $\eta(x)=0$ and $I(\beta_{1}(x))=I(x)<c-\epsilon$.

We can choose $x_{0}\in C$ such that $I(\beta_{1}(x_{0})) = \max I(\beta_{1}(C))=\max I(D)\ge c$ since $D\in\mathcal{F}_{j_{1}}$.
As consequences we have that $x_{0}\in S$ and
\[
c\le\max_{D} I = I(\beta_{1}(x_{0})) <I(x_{0})-\delta\eta(x_{0})\sqrt{\epsilon}\le I(x_{0}),
\]
and, thus, $\eta(x_{0})=1$ and
\[
\max_{D} I < I(x_{0})-\delta\sqrt{\epsilon} \le \max_{C}I-\delta\sqrt{\epsilon}
\le \max_{C}I - \dH(C,D)\sqrt{\epsilon},
\]
which is a contradiction with \eqref{eq:Ekeland}.
\end{proof}
%
%
\section{The main result}\label{sec3}
In this section we state and prove our main result for the Lorentz force equation
\begin{equation}\label{eq:lorentz}
\frac{d}{dt}\left(\frac{\dot{x}}{\sqrt{1-|\dot{x}|^{2}/c^{2}}}\right) =
E(t,x) + \dot x \times B(t,x), \qquad x \in \mathbb{R}^3,
\end{equation}
where, as usual,
\begin{equation}\label{formulaEB}
E(t,x) = - \nabla_x V(t,x) - \partial_t A(t,x), \qquad B(t,x) = \textnormal{curl}_x A(t,x).
\end{equation}
Notice that, without loss of generality, we have normalized the charge-to-mass ratio to $1$ (while, on the other hand, we prefer to keep track of the value $c$ of the speed of light).

As already mentioned in the Introduction, our main interest is in covering the case when the potential $V$ is singular. 
More precisely, we assume that the singularities of $V$ are described by $N$ functions $r_1,\ldots,r_N: \mathbb{R} \to \mathbb{R}^3$ which are $T$-periodic (for some $T > 0$), of class $C^1$, with $\|\dot{r}_{i}\|_{\infty}<c$, and such that 
$r_i(t) \neq r_j(t)$ for every $t \in [0,T]$ and $i \neq j$. Accordingly, we settle equation \eqref{eq:lorentz} on the open domain
$$
\Omega = \{ (t,x) \in \mathbb{R} \times \mathbb{R}^3 \, : \, x \neq r_j(t), \, \forall \ j=1,\ldots,N\}.
$$
The following result holds true.
\begin{Theorem}\label{teo:main}
Let us assume that $V:  \Omega \to \mathbb{R}$ and $A:\Omega \to 
\mathbb{R}^3$ are of class $C^1$, $T$-periodic in the first variable, and satisfy the following conditions:
\begin{itemize}
	\item[(V)] $V(t,x) < 0$ for every $(t,x) \in \Omega$ and there exist $\kappa > 0$ and $\delta > 0$ such that, for every $i =1,\ldots,N$,
	\begin{equation} \label{eq:Vsing}
	V(t,x) \leq - \frac{\kappa}{\vert x - r_i(t) \vert}, \quad \forall \ (t,x) \in \Omega \mbox{ such that } \vert x - r_i(t) \vert < \delta;
	\end{equation}
	\item[(AV1)] there exists $\kappa' \in (0,1)$ such that
	$$
	\vert A(t,x) \vert \leq -\frac{\kappa'}{c} V(t,x), \quad \forall \ (t,x) \in \Omega;
	$$
	\item[(AV2)] it holds that	
	$$
	\lim_{\vert x \vert \to \infty} \left( |V(t,x)| + \vert \nabla_x V(t,x) + \partial_t A(t,x)\vert +\vert\textnormal{curl}_x A(t,x) \vert \right) = 0,
	$$
	uniformly in $t\in \RR$.
\end{itemize}
	Then, equation \eqref{eq:lorentz} has infinitely many $T$-periodic solutions.
\end{Theorem}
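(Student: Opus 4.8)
The plan is to recast \eqref{eq:lorentz} as the search for critical points of the action functional $I$ introduced in the Introduction and to apply the abstract min-max principle of Theorem~\ref{thm:minmax} with $j_{0}=3$. I set $X=W^{1,\infty}_{T}$ and write $I=\psi+\Phi$ with
\[
\psi(x)=\int_{0}^{T}c^{2}\Bigl(1-\sqrt{1-|\dot x(t)|^{2}/c^{2}}\Bigr)\,dt,\qquad
\Phi(x)=\int_{0}^{T}\bigl(-V(t,x(t))+A(t,x(t))\cdot\dot x(t)\bigr)\,dt,
\]
so that $D_{\psi}=\{x:\|\dot x\|_{\infty}\le c\}$ and $D_{\Phi}=\Lambda$ is the open set of collision-free loops $(t,x(t))\in\Omega$. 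The first block of the proof is to check that $I$ fits Assumption~\ref{ass:I}: convexity and lower semicontinuity of $\psi$ are immediate from convexity of the relativistic kinetic integrand, while the $C^{1}$ (hence locally Lipschitz) character of $\Phi$ on $D_{\Phi}$ and the identification of critical points with $T$-periodic solutions of \eqref{eq:lorentz} follow the computations of \cite{ArBeTo19,ArBeTo20,BoDaPapp}. The delicate item is (4): if $\dist(x_{n},\partial D_{\Phi})\to0$ then some $x_{n}$ approaches a singularity $r_{i}$, and here I would exploit the a priori velocity bound $\|\dot x_{n}\|_{\infty}\le c$ together with $\|\dot r_{i}\|_{\infty}<c$ to estimate $|x_{n}(t)-r_{i}(t)|\le 2c\,|t-t_{0}|$ near a near-collision time $t_{0}$; combined with the Keplerian lower bound (V), this forces $-\int_{0}^{T}V(t,x_{n})\,dt\to+\infty$ and hence $I(x_{n})\to+\infty$. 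This is exactly where the relativistic constraint makes the mere Keplerian (rather than strong-force) singularity sufficient.

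Next I would establish the two analytic ingredients needed by Theorem~\ref{thm:minmax}. First, $\inf I=0$: from (AV1) one gets $-V+A\cdot\dot x\ge(1-\kappa')|V|>0$ pointwise on $D_{I}$, so $I>0$ throughout, while constant loops $x\equiv p$ with $|p|\to\infty$ give $I\to0$ by (AV2). Second, the weak Palais-Smale condition at every level $c>0$: given a PS-sequence at level $c$, the full decay (AV2) of the fields forces any sequence drifting to infinity to have vanishing action, which is incompatible with $c>0$; hence $\{x_{n}\}$ stays in a bounded region and, being uniformly Lipschitz (so equicontinuous), is precompact in the weaker space $Y$ of uniform convergence. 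The bounded level together with item (4) above rules out a colliding limit, and the structural machinery of \cite{ArBeTo19} then identifies the limit as a critical point at level $c$.

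The topological part is the heart of the matter. I must produce, for every $j$, a compact $A\subset D_{\Phi}$ with $\cat_{X}(A,D_{\Phi})\ge j$, so that $\mathcal{F}_{j}\ne\emptyset$: since the singular set is a finite union of $T$-periodic curves $r_{i}([0,T])$, for large $R$ the loops contained in $\RR^{3}\setminus\bigcup_{i}r_{i}([0,T])$ realize the relevant spheres, and the free loop space of this configuration space has unbounded Lusternik-Schnirelmann category, from which compact subsets of arbitrarily large category can be extracted in the spirit of \cite{ACZbook}. Compactness of the members of $\mathcal{F}_{j}$ gives $c_{j}<+\infty$ at once. The strict inequality $c_{j}>0=\inf I$ for $j\ge3$ I would obtain by showing that the low sublevels $\{I\le\varepsilon\}$ deformation retract onto near-constant loops concentrated at large $|x|$, a region homotopy equivalent to $S^{2}$ and hence of category at most $2$; thus a set of category $\ge3$ cannot sit in a low sublevel, forcing $c_{3}>0$. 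With (i)--(iii) verified for $j_{0}=3$, Theorem~\ref{thm:minmax} yields infinitely many critical points of $I$, equivalently infinitely many $T$-periodic solutions of \eqref{eq:lorentz}.

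The main obstacle I anticipate is twofold. On the analytic side, proving the weak Palais-Smale condition in the non-reflexive space $W^{1,\infty}_{T}$ with a non-smooth relativistic kinetic term is delicate, since compactness can only be recovered in the weaker topology $Y$ and one must still certify that the weak limit is a genuine critical point at the same level. On the topological side, the clean category computation separating $c_{j}>0$ for $j\ge3$ from $c_{j}=0$ for $j\le2$ is subtle, hinging on correctly identifying the homotopy type of the low sublevels and on constructing the high-category compacta; these two points, rather than the routine verification of Assumption~\ref{ass:I}, are where the real work lies.
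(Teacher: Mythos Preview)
Your proposal follows essentially the same route as the paper: set up $I=\psi+\Phi$ on $X=W^{1,\infty}_T$, verify Assumption~\ref{ass:I} (with the blow-up item driven by the Keplerian bound plus the velocity cap), prove the weak PS condition at levels $c>0$, construct high-category compacta via the free loop space of a punctured $\RR^3$, show $c_3>0$ by arguing that low sublevels deform onto far-away constant loops of category at most $2$, and apply Theorem~\ref{thm:minmax} with $j_0=3$.

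Two points in your sketch are too quick and would not go through as written. First, the claim ``compactness of the members of $\mathcal{F}_j$ gives $c_j<+\infty$ at once'' is not correct: a compact $A\subset D_\Phi=\Lambda$ need not lie in $D_\psi$, and then $\sup_A\psi=+\infty$, hence $\sup_A I=+\infty$. The paper deals with this by a scaling $x\mapsto r_1+\lambda(x-r_1)$ which, for $\lambda$ small, simultaneously forces $\|\dot x_\lambda\|_\infty\le c$ (using $\|\dot r_1\|_\infty<c$) and keeps $A_\lambda$ inside $\Lambda$, while preserving the category. Second, in the weak PS argument you assert that a sequence drifting to infinity has vanishing action; but (AV2) alone only kills $\Phi(x_n)$, whereas $\psi(x_n)$ could remain bounded away from $0$. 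The paper closes this gap by testing the PS inequality \eqref{PS-formula} with $z=\bar x_n$ and using the expression \eqref{dphi2} for $d\Phi$ together with (AV2) to conclude $\psi(x_n)\to 0$, whence $I(x_n)\to 0$, contradicting $c>0$. With these two refinements your outline coincides with the paper's proof.
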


\begin{Remark}
Notice that the potential $A$ can be either regular or singular: however, in this last case, the behavior of $A$ near the singularities has to be consistent with assumption (AV1). Let us observe that the case $A \equiv 0$ is allowed.
\end{Remark}

\begin{Remark}
Let us point out that assumption (AV2) can be replaced by
$$
\lim_{\vert x \vert \to \infty} \left( |V(t,x)| + \vert \nabla_x V(t,x)\vert  + \vert D_x A(t,x)\vert \right) = 0,
$$
uniformly in $t\in \RR$. Indeed, the condition 
\begin{equation} \label{eq-nuovaipo}
\lim_{\vert x \vert \to \infty} \left( \vert \nabla_x V(t,x) + \partial_t A(t,x)\vert +\vert\textnormal{curl}_x A(t,x) \vert \right) = 0,
\end{equation}
uniformly in $t\in \RR$, is used to prove the validity of the weak Palais-Smale condition (cf. the proof Lemma \ref{pro:ps} and, in particular, formula \eqref{eq-formulaqui}, which in turn is obtained from \eqref{eq-formulaqui0} using the expression for $d\Phi$ given by \eqref{dphi2}). When assuming, instead of \eqref{eq-nuovaipo}, the condition
\[
\lim_{\vert x \vert \to \infty} \left( \vert \nabla_x V(t,x)\vert  + \vert D_x A(t,x)\vert  \right) = 0,
\]
uniformly in $t\in \RR$, then the same conclusion can be obtained using \eqref{dphi1} instead of \eqref{dphi2} in formula \eqref{eq-formulaqui0}. We prefer to suppose (AV2) because it can be verified in a more direct way in the application to Li\'enard-Wiechert potentials (cf. \eqref{eq:stimainfinitoelettrico} and \eqref{eq:stimainfinitomagnetico}).
\end{Remark}

The rest of the section is devoted to the proof of Theorem \ref{teo:main}, which follows from the abstract result Theorem \ref{thm:minmax}.

So, let us first describe the variational setting; in what follows, we take advantage of results given both in \cite{ArBeTo19} (where, however, $V$ and $A$ are not allowed to be singular) and in \cite{BoDaPapp} (where $A = 0$, but $V$ is singular).
Let us consider the Banach space
\[
X=\left\{ x\in W^{1,\infty}(0,T;\RR^{3}) : x(0)=x(T) \right\},
\]
endowed with its usual norm $ \|x\| = \|x\|_{\infty} + \|\dot{x}\|_{\infty}$. 
We define the functional
$\psi: X \to (-\infty,+\infty]$ as
$$
\psi(x) = \begin{dcases}
\int_{0}^{T} c^2 \left( 1 - \sqrt{ 1 - \frac{\vert \dot x(t) \vert^2}{c^2}}\right) \, dt
& \text{if } \Vert \dot x \Vert_{\infty} \leq c; \\
+\infty & \text{otherwise}.
\end{dcases}
$$
According to the notation of Section \ref{sec2}, we thus have
$$
D_\psi = \{ x \in X : \Vert \dot x \Vert_{\infty} \leq c\}.
$$
Moreover, we consider the open subset of $X$
$$
\Lambda = \{ x \in X : (t,x(t)) \in \Omega, \quad \forall \ t \in [0,T]\}
$$
and we define $ \Phi:X\to(-\infty,+\infty]$ as
$$
\Phi(x) = \begin{dcases}
\int_{0}^{T} \left( - V(t,x(t))+ A(t,x(t)) \cdot \dot x(t) \right) \, dt & \text{if } x \in \Lambda; \\
+\infty & \text{otherwise}, 
\end{dcases}
$$
so that $D_\Phi = \Lambda$.
Finally, we define the action functional $I: X \to(-\infty,+\infty]$ as
$$
I(x) = \psi(x) + \Phi(x), \quad \forall \ x \in X,
$$
and we recall the notation $D_I = D_\psi \cap D_\Phi$. 

For further convenience, we observe that
 assumption (AV1) implies that
	\begin{equation}\label{eq:relazione}
	 - V(t,x(t))+ \dot x(t) \cdot A(t,x(t)) \geq (1-\kappa') \, \left( -V(t,x(t))\right), \quad \forall \ x\in D_I, \, t \in [0,T],
	\end{equation}
	and then, by assumption (V),
\begin{equation}\label{segnophi}
\Phi(x) =\int_0^T \left( - V(t,x(t))+ \dot x(t) \cdot A(t,x(t))\right) \,dt >0, \quad \forall \ x \in D_I.
\end{equation}
Taking into account that $\psi\geq 0$ and that $I=+\infty$ outside $D_I$, we deduce that
\begin{equation}\label{segnoI}
I(x)  >0,\quad \forall \ x \in X.
\end{equation}

In the next Lemma, we show that this functional satisfies the structural Assumption~\ref{ass:I} of Section \ref{sec2} and that, moreover, its critical points correspond to classical $T$-periodic solutions of the Lorentz force equation \eqref{eq:lorentz}.

\begin{Lemma}\label{pro:assumptionI}
	The functional $ I $ satisfies Assumption~\ref{ass:I}. Moreover, the functional $\psi$ is lower semicontinuous with respect to uniform convergence, namely: if $x \in X$ and $\{x_n\}$ is a sequence in $D_\psi$ such that $x_n \to x$ uniformly on $[0,T]$, then 
	$x \in D_\psi$ and 
	$$
	\psi(x) \leq \liminf_{n \to +\infty} \psi(x_n).
	$$
	Moreover, each critical point $x \in D_I$ of $I$ satisfies $\vert \dot x(t) \vert < c$ for every $t \in [0,T]$ and correspond to a classical
	$T$-periodic solution of equation \eqref{eq:lorentz}.
\end{Lemma}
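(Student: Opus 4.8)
The plan is to verify the four items of Assumption~\ref{ass:I} in turn, then establish the lower semicontinuity of $\psi$ under uniform convergence, and finally analyze the critical points by convex duality; the arguments parallel those of \cite{ArBeTo19,ArBeTo20,BoDaPapp}. For item~1, openness of $D_\Phi=\Lambda$ follows because $\Omega$ is open and the embedding $X\hookrightarrow C([0,T];\mathbb{R}^3)$ is continuous, so the constraint $(t,x(t))\in\Omega$ for all $t$ survives small uniform perturbations; and $D_I\neq\emptyset$ since any constant path with value outside the compact set $\bigcup_i r_i([0,T])$ has zero velocity and lies in $D_\psi\cap\Lambda$. For item~2, both convexity and lower semicontinuity of $\psi$ descend from convexity and continuity of the integrand $s\mapsto c^2(1-\sqrt{1-|s|^2/c^2})$ on $\{|s|\le c\}$; since this integrand is bounded by $c^2$, a dominated-convergence argument shows $\psi$ is in fact norm-continuous on all of $D_\psi$, which is stronger than the required continuity on compact sets with finite supremum. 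For item~3, on any $x\in\Lambda$ the graph $\{(t,x(t)):t\in[0,T]\}$ is a compact subset of $\Omega$, where $V$ and $A$ are $C^1$ with bounded derivatives; hence $\Phi$ is $C^1$ (in particular locally Lipschitz) near $x$, as recorded in Remark~\ref{rem:loclip}, with $d\Phi$ obtained by differentiation under the integral sign.

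The decisive item~4 is where the Keplerian lower bound~(V) enters, and it is the main obstacle. If $\dist(x_n,\partial\Lambda)\to0$, there exist times $t_n$ and indices $i_n$ with $\varepsilon_n\vcentcolon=|x_n(t_n)-r_{i_n}(t_n)|\to0$. Because $\|\dot x_n\|_\infty\le c$ and $\|\dot r_i\|_\infty<c$, for $t$ near $t_n$ one has the linear growth bound $|x_n(t)-r_{i_n}(t)|\le\varepsilon_n+2c|t-t_n|$; integrating the estimate $-V\ge\kappa/|x-r_{i_n}|$ from~(V) over an interval $|t-t_n|\le\tau$ with $\varepsilon_n+2c\tau<\delta$ yields $\int_0^T(-V(t,x_n(t)))\,dt\ge\frac{\kappa}{c}\log\frac{\varepsilon_n+2c\tau}{\varepsilon_n}\to+\infty$. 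Combined with \eqref{eq:relazione}--\eqref{segnophi} this gives $\Phi(x_n)\to+\infty$, and since $\psi\ge0$ we conclude $I(x_n)\to+\infty$. The heart of the matter is precisely that the finite speed constraint $|\dot x|\le c$ prevents a near-colliding path from leaving the singularity too fast, so the time spent near it is bounded below and the integral of $-V$ is forced to diverge.

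The lower semicontinuity of $\psi$ under uniform convergence is obtained by noting that a sequence $\{x_n\}\subset D_\psi$ with $x_n\to x$ uniformly is uniformly $c$-Lipschitz; hence $x$ is $c$-Lipschitz, so $\|\dot x\|_\infty\le c$ and $x\in D_\psi$. Passing to a subsequence, $\dot x_n\rightharpoonup\dot x$ weakly-$*$ in $L^\infty(0,T;\mathbb{R}^3)$, and weak lower semicontinuity of the convex integral functional $\psi$ then gives $\psi(x)\le\liminf_n\psi(x_n)$.

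For the critical points, writing the critical point condition with $\Phi^0=d\Phi$ (Remark~\ref{rem:loclip}) gives the variational inequality $d\Phi(x)[z-x]+\psi(z)-\psi(x)\ge0$ for all $z\in X$, that is $-d\Phi(x)\in\partial\psi(x)$. Testing against admissible variations produces, in weak form, a relativistic momentum $w(t)\in\partial L(\dot x(t))$ solving $\dot w=E(t,x)+\dot x\times B(t,x)$ with right-hand side bounded, since $(t,x(t))$ ranges over a compact subset of $\Omega$; hence $w\in W^{1,\infty}$. Inverting the Legendre relation $w=\dot x/\sqrt{1-|\dot x|^2/c^2}$ yields $\dot x=c\,w/\sqrt{c^2+|w|^2}$, which forces $|\dot x(t)|<c$ for every $t$ and shows $\dot x$ is continuous; a standard bootstrap then upgrades $x$ to a classical $C^2$ solution of \eqref{eq:lorentz}, while the natural (free-endpoint) boundary condition encoded in $X$ gives the periodicity $w(0)=w(T)$, hence $\dot x(0)=\dot x(T)$, so that $x$ is a genuine $T$-periodic solution.
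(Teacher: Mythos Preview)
Your proposal is correct; the treatment of items~1--3 of Assumption~\ref{ass:I}, the lower semicontinuity of $\psi$ under uniform convergence, and the analysis of critical points all parallel the paper's approach (which, like yours, defers the fine details to \cite{ArBeTo19,BoDaPapp}). The genuine difference is in item~4, the blow-up at the boundary of $\Lambda$. The paper proceeds by compactness: it first shows the near-colliding sequence $\{x_n\}\subset D_I$ is bounded in $X$, extracts via Ascoli--Arzel\`a a uniform limit $z\in D_\psi$, observes that $z$ actually collides (say $z(t_0)=r_{i_0}(t_0)$) so that $\int_0^T|z-r_{i_0}|^{-1}\,dt=+\infty$ by Lipschitz continuity of $z-r_{i_0}$, and then pushes this divergence back to $\Phi(x_n)$ through Fatou's lemma together with \eqref{eq:relazione}. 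Your route is more direct and quantitative: from the $2c$-Lipschitz bound on $x_n-r_{i_n}$ you get $|x_n(t)-r_{i_n}(t)|\le\varepsilon_n+2c|t-t_n|$, integrate the Keplerian lower bound~(V) over a short interval near $t_n$, and obtain an explicit $\tfrac{\kappa}{c}\log(1/\varepsilon_n)$ divergence, bypassing any compactness argument. Both proofs rest on the same mechanism---the speed bound forces the path to linger near the singularity long enough for the $1/r$ potential to diverge---but yours yields an effective estimate, while the paper's soft Fatou argument would adapt more readily should one weaken the pointwise lower bound \eqref{eq:Vsing} to an integral one.
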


\begin{proof}
	Most of the above statement has been already proved in \cite[Proposition 3.2]{BoDaPapp} (and, in turn, in corresponding results in \cite{ArBeTo19}); notice indeed that the functional $\psi$ is the same as the one considered therein, while $\Phi$, despite the presence of the magnetic term, is still of class $C^1$ on the open set $D_\Phi = \Lambda$, with
\begin{equation}\label{dphi1}
d\Phi(x)[y] =\displaystyle \int_{0}^{T} \left( - \nabla_x V(t,x(t)) \cdot y(t) + A(t,x(t)) \cdot \dot{y}(t) + ((D_x A(t,x(t))^T \dot x(t)) \cdot y(t) \right) \, dt,
\end{equation}
cf. \cite[Lemma 1]{ArBeTo19} (in the above formula, the term $(D_x A)^T \dot x$ is meant as the product of the transpose of the Jacobian matrix $D_x A$ with the (column) vector $\dot x$).
Notice that, by integrating by parts,
$$
\int_0^T A(t,x(t)) \cdot \dot{y}(t) \,dt = -\int_0^T \partial_t A(t,x(t)) \cdot y(t) \,dt
- \int_0^T (D_x A(t,x(t)) \dot x(t)) \cdot y(t) \,dt
$$
From this, together with the identity
$$
((D_x A(t,x(t))^T  \dot x(t)) \cdot y(t) - (D_x A(t,x(t)) \dot x(t)) \cdot y(t) = (\dot x(t) \times \textnormal{curl}_x A(t,x(t))) \cdot y(t),
$$
we can rewrite $d\Phi$ in the equivalent form
\begin{equation}\label{dphi2}
d\Phi(x)[y] =
 \int_{0}^{T} \left( (- \nabla_x V(t,x(t))  - \partial_t A(t,x(t))) \cdot y(t) + (\dot x(t)\times {\rm curl}_x A(t,x(t))) \cdot y(t) \right) \, dt.
\end{equation}
Using this formula, and recalling \eqref{formulaEB}, the fact that critical points of $I$ gives rise to classical $T$-periodic solutions of equation \eqref{eq:lorentz} can be proved with the very same arguments of \cite[Proposition 3.3]{BoDaPapp} (see also \cite[Theorem 2]{ArBeTo19}).
	
	The only point which requires a bit of care is the proof of the property of blow-up on the boundary (that is, item 4 of Assumption ~\ref{ass:I}), for which we give the complete details. 
	At first, we notice that 
	$$
	\partial D_{\Phi} = X \setminus \Lambda = \{x \in X : \ \exists \, i \in \{1,\ldots , N\} \ \exists \, t_0\in [0,T]:\ x(t_0) = r_i(t_0)\}.
	$$
	So, let us consider a sequence $\{x_n\}$ in $D_I$ such that $d_n :
	= \dist(x_n, \partial D_{\Phi}) \to 0$
	and, accordingly, let $y_n \in \partial D_{\Phi}$ be such that $\Vert x_n - y_n \Vert \leq 2d_n$.
	Since $\Vert \dot x_n \Vert_\infty \leq c$ for any $n$, we find that
	$$
	\Vert \dot y_n \Vert_\infty \leq c + \Vert \dot y_n - \dot x_n  \Vert_\infty \leq c + \Vert  y_n - x_n  \Vert \leq c + d_n \leq c+1
	$$
	for $n$ large enough. Moreover, since $y_n(t_n) = r_{i_n}(t_n)$ for some $t_n \in [0,T]$ and $i_n \in \{1,\ldots, N\}$, we have
	$$
	\Vert y_n -r_{i_n}\Vert_\infty \leq (2c+1)T
	$$
	and thus the sequence $\{y_n\}$ is bounded in $X$. Since $\Vert x_n - y_n \Vert \leq 2d_n$, the sequence $\{x_n\}$ is bounded in $X$ as well. Therefore, the Ascoli-Arzelà theorem yields the existence of a continuous function $z$ such that, up to subsequence, $x_n \to z$ and $y_n \to z$ uniformly on $[0,T]$. Hence, $z(0) = z(T)$ and $z(t_0) = r_{i_0}(t_0)$ for some $t_0 \in [0,T]$, limit point of the sequence $t_n$, and $i_0 \in \{1,\ldots ,N\}$, limit point of the sequence $i_n$. Moreover, passing to the limit in the Lipschitz-continuity condition
	$$
	\vert x_n(t_2) - x_n(t_1) \vert \leq c |t_2-t_1|, \quad \text{ for every } t_1,t_2 \in [0,T],
	$$
	we easily see that $z \in D_\psi \subset X$. Hence, the function $z - r_{i_0}$ is Lipschitz continuous and so
	$$
	\int_{0}^{T} \dfrac{1}{|z(t)-r_{i_0}(t)|}\, dt = +\infty.
	$$
	Therefore, by (V) and (AV1) and using Fatou's lemma we obtain
	$$
	\begin{array}{l}
	\displaystyle \liminf_{n \to +\infty} \int_0^T \left(-V(t,x_n(t))+ A(t,x_n(t)) \cdot \dot{x}_n(t)\right)\, dt \geq \liminf_{n \to +\infty} \int_0^T \left(-(1-\kappa') \, V(t,x_n(t))\right)\, dt \\
	\\
	\displaystyle \geq (1-\kappa') \int_0^T \liminf_{n \to +\infty} \left(-V(t,x_n(t))\right)\, dt 
	\displaystyle \geq \kappa \, (1-\kappa') \int_0^T \liminf_{n \to +\infty} \dfrac{1}{|x_n(t)-r_{i_0}(t)|}\, dt \\
	\\
	\displaystyle
	= \kappa \, (1-\kappa') \int_0^T \dfrac{1}{|z(t)-r_{i_0}(t)|}\, dt= +\infty.
	\end{array}
	$$
	Since $0 \leq \psi(x_{n}) \leq mc^2 T$, we finally conclude that $I(x_n) \to +\infty$ as desired.
\end{proof}

Let us now notice that 
$$
\inf_X I = 0.
$$
Indeed, we have already observed that $I>0$, cf. \eqref{segnoI}. Moreover, for a sequence $x_n(t) \equiv \xi_n$ with $\vert \xi_n \vert \to +\infty$ we readily see, by assumption (AV2), that
$I(x_n) = \Phi(x_n) \to 0$. With this in mind, the next result ensures that the functional $I$ satisfies, at each level $c > \inf I = 0$, the weak Palais-Smale condition, according to Definition \ref{def:weakPS} with $Y = L^\infty(0,T)$.

\begin{Lemma} \label{pro:ps}
	The functional $I$ satisfies the weak Palais-Smale condition at each level $c > 0$.
\end{Lemma}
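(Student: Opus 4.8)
The plan is to verify Definition~\ref{def:weakPS} with $Y=L^{\infty}(0,T)$ by extracting from a PS-sequence a uniformly convergent subsequence whose limit is a critical point at the prescribed level. Fix a PS-sequence $\{x_n\}$ with $I(x_n)\to c>0$; since $I(x_n)<+\infty$ forces $x_n\in D_\psi$, we automatically have $\|\dot x_n\|_\infty\le c$, so the velocities are uniformly bounded and it remains to bound $\|x_n\|_\infty$. Because $|\dot x_n|\le c$, the oscillation of each $x_n$ on $[0,T]$ is at most $cT$, so if $\max_t|x_n(t)|\to+\infty$ along a subsequence then in fact $|x_n(t)|\to+\infty$ uniformly in $t$. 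On such a subsequence (AV1) and (AV2) give $\Phi(x_n)\to 0$, and testing the PS inequality \eqref{PS-formula} with the constant $z=\bar x_n\coloneqq\frac{1}{T}\int_0^T x_n\,dt$ (for which $\psi(\bar x_n)=0$) yields, after recalling $\Phi^0=d\Phi$ (Remark~\ref{rem:loclip}),
\[
\psi(x_n)\le d\Phi(x_n)[\bar x_n-x_n]+\epsilon_n\|\bar x_n-x_n\|.
\]
Using formula \eqref{dphi2}, the boundedness of $\bar x_n-x_n$ and of $\dot x_n$, and the uniform decay in (AV2) of $\nabla_xV+\partial_tA$ and $\mathrm{curl}_xA$ at infinity, the right-hand side tends to $0$; hence $\psi(x_n)\to 0$ and $I(x_n)\to 0$, contradicting $c>0$. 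Therefore $\{x_n\}$ is bounded in $X$.

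By Ascoli--Arzel\`a there is a subsequence (still denoted $x_n$) with $x_n\to x$ uniformly, $\dot x_n\overset{*}{\rightharpoonup}\dot x$ in $L^\infty$, $x(0)=x(T)$, and $\|\dot x\|_\infty\le c$, so $x\in D_\psi$. To see that $x\in D_\Phi=\Lambda$, I would argue by contradiction using item~4 of Assumption~\ref{ass:I}: since $I(x_n)\to c<+\infty$, the blow-up property established in Lemma~\ref{pro:assumptionI} rules out $\dist(x_n,\partial D_\Phi)\to 0$, so $\dist(x_n,\partial D_\Phi)\ge\rho>0$ along the subsequence. If $x(t_0)=r_{i_0}(t_0)$ for some $t_0,i_0$, the shifted loop $\zeta_n\coloneqq x_n-x_n(t_0)+r_{i_0}(t_0)$ lies in $\partial D_\Phi$ and satisfies $\|x_n-\zeta_n\|=|x_n(t_0)-r_{i_0}(t_0)|\to 0$ by uniform convergence, contradicting $\dist(x_n,\partial D_\Phi)\ge\rho$. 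Hence $x$ is collision-free and $x\in D_I$.

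Next I would pass to the limit in \eqref{PS-formula} for each fixed $z\in X$. Since $x_n\to x$ uniformly with range eventually in a compact subset of $\Omega$, the coefficients $\nabla_xV(t,x_n)$, $A(t,x_n)$ and $D_xA(t,x_n)$ converge uniformly to their values at $x$; combining this strong $L^1$ convergence with $\dot x_n\overset{*}{\rightharpoonup}\dot x$ in the terms of \eqref{dphi1}, every product is of strong$\times$weak-$*$ type, whence $d\Phi(x_n)[z-x_n]\to d\Phi(x)[z-x]$. Rearranging \eqref{PS-formula} as $\psi(x_n)\le d\Phi(x_n)[z-x_n]+\psi(z)+\epsilon_n\|z-x_n\|$ and taking $\limsup$ gives $\limsup_n\psi(x_n)\le d\Phi(x)[z-x]+\psi(z)$; together with the lower semicontinuity of $\psi$ under uniform convergence (Lemma~\ref{pro:assumptionI}) this yields $d\Phi(x)[z-x]+\psi(z)-\psi(x)\ge 0$ for every $z\in X$, i.e.\ $x$ is a critical point of $I$.

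Finally, to identify the level, I would specialize the last estimate to $z=x$, obtaining $\limsup_n\psi(x_n)\le\psi(x)$, which with lower semicontinuity gives $\psi(x_n)\to\psi(x)$; since $\Phi(x_n)\to\Phi(x)$ by the same strong$\times$weak-$*$ argument, we get $I(x_n)\to I(x)$ and hence $I(x)=c$. This produces $x$ as required by Definition~\ref{def:weakPS}, with $x_n\to x$ in $Y=L^\infty$. I expect the main obstacle to be the a priori bound of the first step: the functional is non-coercive and the sequence could in principle escape to infinity at the positive level $c$, so the crux is to combine the choice $z=\bar x_n$ with the decay conditions (AV1)--(AV2) to force $\psi(x_n)\to 0$ and reach a contradiction. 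The subsequent limit passage is delicate only because the derivatives converge merely weakly-$*$, which is precisely the reason one can assert no more than the \emph{weak} Palais--Smale conclusion.
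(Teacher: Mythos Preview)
Your argument is correct and follows the same two-step strategy as the paper: first establish boundedness of a PS-sequence by testing with $z=\bar x_n$ and invoking (AV2) to force $\psi(x_n)\to 0$ together with $\Phi(x_n)\to 0$, reaching a contradiction with $c>0$; then extract a uniform limit and pass to the limit in the PS-inequality. For the second step the paper simply refers to \cite[Proposition~3.5]{BoDaPapp} and \cite[Lemma~5]{ArBeTo19}, whereas you spell out the details (collision-free limit via the blow-up property, strong$\times$weak-$*$ convergence in \eqref{dphi1}, and the specialization $z=x$ to recover the level); your expanded argument is essentially what those references contain.
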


\begin{proof}
	Let $\{x_n\} \subset X$ be a Palais-Smale sequence at level $c > 0$; incidentally, let us notice that $\{x_n\} \subset D_I$, since otherwise $I(x_n) = +\infty$. The proof will be divided in two steps.
	
	At first, we show that the sequence $\{x_n\}$ is bounded in $L^\infty$ (and, thus, in $X$). To see this, let us write 
	$x_n = \tilde x_n + \bar{x}_n$, where $\bar{x}_n = \tfrac{1}{T}\int_0^T x_n$ and $\int_0^T \tilde x_n \,dt = 0$.
	Since $\Vert \dot{\tilde x}_n \Vert_\infty = \Vert \dot x_n \Vert_\infty \leq c$, we have that $\Vert \tilde x_n \Vert_\infty$ is bounded. 
	So, assuming by contradiction that $\Vert x_n \Vert_\infty$ is not bounded yields, up to subsequences, $\vert \bar{x}_n \vert \to +\infty$.
	Then $\vert x_n(t) \vert \geq \vert \bar{x}_n \vert - \Vert \dot{\tilde} x_n \Vert_\infty$
	and so 
	\begin{equation}\label{PS-min}
	\min_t \vert x_n(t) \vert \to +\infty.
	\end{equation}
	Choosing $z = \bar{x}_n$ in \eqref{PS-formula}, we obtain
\begin{equation} \label{eq-formulaqui0}
	d\Phi(x_{n})[-\tilde x_n]+\psi(\bar{x}_n)-\psi(x_{n}) \ge-\epsilon_{n}\| \tilde x_n \|, 
	\quad \forall \ n\in\NN,
\end{equation}
that is, using \eqref{dphi2}, 
\begin{equation} \label{eq-formulaqui}
	\psi(x_n) \leq  	
\epsilon_{n}\| \tilde x_n \| + \int_{0}^{T} \left(  \left(\nabla_x V(t,x(t))  + \partial_t A(t,x_n(t))\right) \cdot \tilde x_n(t) + (\dot x_n(t)\times {\rm curl}_x A(t,x_n(t))) \cdot \tilde x_n(t) \right) \, dt.
	\end{equation}
Therefore, recalling the boundedness of $\Vert \tilde x_n \Vert_\infty$, \eqref{PS-min} and assumption (AV2) we obtain $\psi(x_n) \to 0$.
	On the other hand, for the same reasons $\Phi(x_n) \to 0$ and so
	$$
	\psi(x_n) = I(x_n) - \Phi(x_n) \to c > 0, 
	$$
	a contradiction.
	
	As a second step, we show that the boundedness of $\{x_n\}$ implies the existence of a subsequence $\{x_{n_k}\}$ converging
	in $L^\infty(0,T)$ to a critical point $x$ of the functional $I$ at level $c$ (that is, the condition required in the definition of weak Palais-Smale condition at level $c$). For this, we combine the arguments used in the proof of \cite[Proposition 3.5]{BoDaPapp} with the ones in the proof of \cite[Lemma 5]{ArBeTo19}.
\end{proof}

We now consider the sets $\mathcal{F}_j$ and the min-max levels $c_j$ defined respectively in \eqref{eq:defcompatti} 
and \eqref{eq:livelli} and we turn to the proof of the validity of assumptions (i)-(ii)-(iii) of Theorem \ref{thm:minmax}.

At first, we deal with (i)-(ii). 

\begin{Lemma} \label{pro:compatti}
	For every integer $j \geq 1$, it holds that:
\begin{itemize}
\item[(i)] $\mathcal{F}_j\neq \emptyset$,
\item[(ii)] $c_j < +\infty$.
\end{itemize}
\end{Lemma}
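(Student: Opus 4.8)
The plan is to establish (i) first and then obtain (ii) almost for free. For (i) I must exhibit, for each $j\ge 1$, a compact set $A_j\subset D_\Phi$ with $\cat_X(A_j,D_\Phi)\ge j$ that moreover lies in $D_I$, so that $\psi$ is finite on it. The whole difficulty is topological. Since $D_\Phi=\Lambda$ is the space of $T$-periodic loops in $\RR^3$ avoiding the $N$ moving singularities, and since each time-slice $\RR^3\setminus\{r_1(t),\dots,r_N(t)\}$ is homotopy equivalent to a wedge of $2$-spheres, the loops cannot be classified by a winding number, as in the planar case of \cite{BoDaPapp}; instead the relevant richness comes from the free loop space of $\RR^3\setminus\{\text{point}\}\simeq S^2$, whose Lusternik--Schnirelmann category is infinite. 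The idea is therefore to localise near a single singularity, say $r_1$, where this loop space is visible, and to transport its topology into $\Lambda$.

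Concretely, I would set $m=\min_{i\ne 1}\min_{t\in[0,T]}|r_1(t)-r_i(t)|>0$ (positive by the disjointness hypothesis and compactness) and fix $\rho\in(0,m)$. For a $T$-periodic Lipschitz loop $\gamma$ taking values in the unit sphere $S^2\subset\RR^3$, define
\[
e_\rho(\gamma)(t)=r_1(t)+\rho\,\gamma(t).
\]
Then $|e_\rho(\gamma)(t)-r_1(t)|=\rho<m$ forces $e_\rho(\gamma)(t)\ne r_j(t)$ for every $j$, so $e_\rho(\gamma)\in\Lambda=D_\Phi$; moreover $\bigl\|\tfrac{d}{dt}e_\rho(\gamma)\bigr\|_\infty\le\|\dot r_1\|_\infty+\rho\|\dot\gamma\|_\infty$, which, since $\|\dot r_1\|_\infty<c$, can be made $\le c$ by taking $\rho$ small once $\|\dot\gamma\|_\infty$ is controlled. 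I would then pick a compact family $K_j$ of $C^1$ loops on $S^2$ with $\cat(K_j,\Lambda S^2)\ge j$ and uniformly bounded velocities (finite-dimensional smooth families suffice), set $A_j=e_\rho(K_j)$, and check that $A_j$ is compact in $X$, as the image of a compact set under the affine, hence $W^{1,\infty}$-continuous, map $e_\rho$, and contained in $D_I$.

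The key step is the lower bound $\cat_X(A_j,D_\Phi)\ge j$. For this I would use the fibrewise radial retraction $\pi:\Lambda\to\Lambda_1$, where $\Lambda_1:=e_\rho(\Lambda S^2)$, defined by
\[
\pi(x)(t)=r_1(t)+\rho\,\frac{x(t)-r_1(t)}{|x(t)-r_1(t)|},
\]
which is well defined and continuous on $\Lambda$ (where $x(t)\ne r_1(t)$), maps into $\Lambda_1$, and restricts to the identity on $\Lambda_1$. A short argument then shows that any such retraction forces $\cat_X(A,\Lambda)\ge\cat_X(A,\Lambda_1)$ for every $A\subset\Lambda_1$: a covering of $A$ by closed sets contractible in $\Lambda$ is carried by $\pi$ to a covering by closed sets contractible in $\Lambda_1$ (compose each contraction with $\pi$, using $\pi|_{\Lambda_1}=\mathrm{id}$). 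Since $e_\rho$ is a homeomorphism onto $\Lambda_1$, this yields $\cat_X(A_j,\Lambda)\ge\cat(K_j,\Lambda S^2)\ge j$, proving (i). The genuine obstacle is the topological input that $\Lambda S^2$ has infinite category realised on compact subsets; this is classical (cf.\ \cite{ACZbook}) and reflects the cohomology of the loop space of $S^2$, the based loop space already carrying a polynomial subalgebra of infinite cup-length.

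Finally, (ii) is immediate once (i) is in hand. Since the integrand defining $\psi$ is bounded by $c^2$, one has $\psi\le c^2T$ on all of $D_\psi$; as $\Phi$ is of class $C^1$, hence continuous, on the open set $D_\Phi$, the functional $I=\psi+\Phi$ is bounded above on the compact set $A_j\subset D_I$. Therefore
\[
c_j\le\sup_{x\in A_j}I(x)<+\infty,
\]
which is exactly (ii).
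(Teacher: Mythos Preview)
Your approach is sound and close in spirit to the paper's, though the technical route differs. Both arguments localize near a single singularity $r_1$, invoke the infinite cup-length of the free loop space of $\RR^3\setminus\{0\}\simeq S^2$ (Fadell--Husseini \cite{FaHu92}), and shrink by a small factor to land in $D_\psi$ while staying clear of the other $r_i$. The difference lies in how the category estimate is transferred to the full space $\Lambda$. The paper works with the \emph{inclusion} $\Lambda\subset X\cap\Lambda_1$ (where its $\Lambda_1$ denotes loops avoiding only $r_1$) and applies property (P2), after invoking Palais's theorem \cite{Palais66} to identify the homotopy type of $X\cap\Lambda_1$ with that of continuous loops in $\RR^3\setminus\{0\}$; you instead embed a sphere of loops $\Lambda_1=e_\rho(\Lambda S^2)\subset\Lambda$ and use a radial \emph{retraction} $\pi:\Lambda\to\Lambda_1$ to obtain $\cat_X(A,\Lambda)\ge\cat_X(A,\Lambda_1)$. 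Your device is geometrically appealing; the paper's is slightly cleaner since (P2) is already on the shelf. Two points in your write-up deserve care. First, in the retraction argument the images $\pi(C_i)$ need not be closed; the correct covering sets are $D_i=C_i\cap\Lambda_1$, which are closed in $\Lambda_1$ and contractible there via $\pi\circ H_i|_{D_i}$ (using $\pi|_{\Lambda_1}=\mathrm{id}$). Second, your category input $\cat(K_j,\Lambda S^2)\ge j$ must be read with $\Lambda S^2$ in the $W^{1,\infty}$ topology, since that is the ambient for $\cat_X(A_j,\Lambda_1)$; bridging from the classical $C^0$ statement is exactly where the paper invokes Palais, and your remark that ``finite-dimensional smooth families suffice'' gestures at this but does not quite replace it.
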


\begin{proof}
We first prove that (i) holds when there is only one curve $r_{1}$ of singularities.
To this aim, we make use of the following auxiliary open sets in $C_{T}\coloneqq\{x\in C([0,T],\RR^{3}):x(0)=x(T)\}$, endowed with the topology of uniform convergence:
\[
\begin{aligned}
\Lambda_1 &= \{x\in C_{T} : x(t)\neq r_1(t),\quad \forall \ t\in [0,T]\} \\
\Lambda_0 &= \{x\in C_{T} : x(t)\neq 0,\quad \forall \ t\in [0,T]\}
\end{aligned}
\]
and of the the continuous and dense immersion $\iota:X\to C_{T}$.
We have that $X\cap\Lambda_{1} = \iota^{-1}(\Lambda_{1})$ and that $\iota|_{X\cap\Lambda_{1}}:X\cap\Lambda_{1}\to\Lambda_{1}$ is a homotopy equivalence by \cite[Theorem~16]{Palais66}.
Since the affine isometry $ x\mapsto x-r_{1} $ maps $\Lambda_1$ onto $\Lambda_0$, we have that its composition with $\iota|_{X\cap\Lambda_{1}}$ provides a homotopy equivalence between $X\cap\Lambda_{1}$ and $\Lambda_{0}$.
Using \cite[Corollary~2.8]{FaHu92}, we deduce that the cup length in $\ZZ_{2}$ of $\Lambda_{0}$ is infinite.
Since the cup length is a homotopy invariant, we infer that $X\cap\Lambda_{1}$ contains compact sets with arbitrarily large category by \cite[Lemma~2.9]{FaHu89}.

Now, we are going to show that, for each $ j\ge 1 $, there exists a compact $A\subset\Lambda\cap D_{\psi}$ such that $\cat_{X}(A,\Lambda)\ge j$; this will imply both (i) and (ii), since our functional $\psi$ is bounded in $D_{\psi}$.
We just showed that there exists a compact $A_{1}\subset\Lambda_{1}$ such that
$\cat_{X}(A_{1},\Lambda_{1}\cap X)\geq j$.
For each $\lambda >0$ and $x\in X$ we define $x_{\lambda}=r_{1}+\lambda(x-r_{1})$ and observe
that $x_{\lambda}\in\Lambda_{1}\cap X$ if and only if $x\in\Lambda_{1}\cap X$.
We set $A_{\lambda}=\{x_{\lambda}:x\in A_{1}\}$ which is compact and omeomorphic to $A_{1}$ so that
\[
\cat_{X}(A_{\lambda},\Lambda_{1}\cap X)=\cat_{X}(A_{1},\Lambda_{1}\cap X)\ge j
\quad\forall\ \lambda>0.
\]
Now, let $\delta\coloneqq\min\{|r_{1}(t)-r_{j}(t)|:t\in[0,T],\,j=2,\dots,N\}>0$.
Since
\[
\|x_{\lambda}-r_{1}\|=\lambda\|x-r_{1}\|\le\lambda\dist(r_{1},A_{1})<+\infty
\quad\forall\ x\in A_{1} \text{ and }\forall\ \lambda>0,
\]
for $\lambda<\delta/\dist(r_{1},A_{1})$ we have that $A_{\lambda}\subset \Lambda$ and
\[
\cat_{X}(A_{\lambda},\Lambda) \ge \cat_{X}(A_{1},\Lambda_{1}\cap X)\ge j
\]
by property (P2).
On the other hand, we have that
\[
\|\dot{x}_{\lambda}\|_{\infty}\le \|\dot{r}_{1}\|_{\infty}+\lambda\|\dot{x}-\dot{r}_{1}\|_{\infty}
\le c \quad \forall\ x\in A_{1}
\quad\text{if }\lambda\le \frac{c-\|\dot{r}_{1}\|_{\infty}}{\max_{x\in A_{1}}\|\dot{x}-\dot{r}_{1}\|_{\infty}}.
\]
Hence, if $\lambda>0$ is small enough we have that $A_{\lambda}\subset D_{\psi}\cap\Lambda$ and
$A_{\lambda}\in \mathcal{F}_{j}$.
\end{proof}

Finally, we prove that (iii) of Theorem \ref{thm:minmax} is satisfied with $j_0 = 3$ (while it can be shown that $c_1 = c_2 = 0$).

\begin{Lemma} \label{pro:livelli} 
It holds that $c_3>0$.
\end{Lemma}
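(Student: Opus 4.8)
The plan is to show that once $\epsilon>0$ is chosen small enough, the sublevel set $\{I\le \epsilon\}$ has Lusternik--Schnirelmann category at most $2$ in $\Lambda=D_\Phi$; since every $A\in\mathcal F_3$ satisfies $\cat_X(A,\Lambda)\ge 3$, this forces $\sup_A I\ge \epsilon$ for all such $A$, and hence $c_3\ge \epsilon>0$ (recall $c_3\ge 0$ by \eqref{segnoI}). The heart of the matter is a quantitative localization principle: loops of small action must be slowly moving loops sitting far from the singularities, i.e. near infinity, where $\Lambda$ is topologically as simple as a large sphere $S^2$, whose category is $2$.

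First I would record two elementary bounds valid whenever $I(x)\le \epsilon$. Since $\Phi\ge 0$ by \eqref{segnophi} we get $\psi(x)\le \epsilon$; using the pointwise inequality $c^2(1-\sqrt{1-s^2/c^2})\ge \tfrac12 s^2$ (valid for $s=|\dot x|\le c$) this yields $\int_0^T|\dot x|^2\,dt\le 2\epsilon$, whence by Cauchy--Schwarz the oscillation of $x$ is at most $\sigma(\epsilon):=\sqrt{2T\epsilon}$, so that $\|x-\bar x\|_\infty\le\sigma(\epsilon)$ with $\bar x:=\tfrac1T\int_0^T x$. On the other hand $\psi\ge 0$ together with (AV1) gives $\int_0^T(-V(t,x(t)))\,dt\le \Phi(x)/(1-\kappa')\le \epsilon/(1-\kappa')$.

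Next comes the localization step, which I expect to be the main (though not deep) obstacle: coupling the relativistic kinetic term, which controls oscillation, with the singular potential, which excludes the bounded region, the delicate point being a uniform positive lower bound on $-V$ near the \emph{moving} singularities. Let $R_*=\max_i\|r_i\|_\infty$, so that all singularities lie in $\{|x|\le R_*\}$, and fix once and for all $R_0>R_*+2$. The function $(t,y)\mapsto -V(t,y)$, extended to $+\infty$ on the singular set, is positive and lower semicontinuous on the compact set $[0,T]\times\overline{B_{R_0+2}}$ --- the blow-up being exactly assumption (V), as already exploited in Lemma~\ref{pro:assumptionI} --- hence it admits a strictly positive minimum $m_0>0$ there. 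I then choose $\epsilon>0$ so small that $\sigma(\epsilon)<1$ and $\epsilon/(1-\kappa')<m_0 T$. If $I(x)\le \epsilon$ and, by contradiction, $|\bar x|\le R_0+1$, the oscillation bound gives $|x(t)|\le R_0+2$ for all $t$, so $-V(t,x(t))\ge m_0$ pointwise and $\int_0^T(-V)\ge m_0 T>\epsilon/(1-\kappa')$, a contradiction. Therefore $|\bar x|>R_0+1$, and the oscillation bound then yields $|x(t)|>R_0$ for every $t\in[0,T]$.

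Finally the topological step. On $\{I\le\epsilon\}$ I consider the homotopy $H(s,x)(t)=(1-s)x(t)+s\bar x$. Since $x(t)$ and $\bar x$ both lie in the ball $B(\bar x,1)\subset\{|y|>R_0-2\}\subset\{|y|>R_*\}$, the joining segment never meets a singularity $r_i(t)$, so $H$ is a continuous map into $\Lambda$, equal to the identity at $s=0$ and sending $x$ to the constant loop $\bar x$ at $s=1$. Thus, given $A\in\mathcal F_3$ with $\sup_A I\le\epsilon$, the compact set $H(1,A)=\{\bar x:x\in A\}\subset\{|\xi|>R_0-1\}$ is a deformation of $A$ in $\Lambda$, so $\cat_X(A,\Lambda)\le \cat_X(H(1,A),\Lambda)$ by (P4). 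A radial deformation (staying in $\{|\xi|>R_*\}\subset\Lambda$) retracts $\{|\xi|>R_0-1\}$ onto a sphere $\Sigma=\{|\xi|=\rho\}$ with $\rho>R_0-1$, and $\Sigma$ is covered by two closed spherical caps, each contractible in $\Lambda$ (contract along $\Sigma$ to a pole). Combining (P1) and (P4) gives $\cat_X(H(1,A),\Lambda)\le 2$, hence $\cat_X(A,\Lambda)\le 2$, contradicting $A\in\mathcal F_3$. Consequently no $A\in\mathcal F_3$ has $\sup_A I\le\epsilon$, and therefore $c_3\ge\epsilon>0$.
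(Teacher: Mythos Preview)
Your proof is correct and follows essentially the same route as the paper's: bound the oscillation via the kinetic term, use a uniform positive lower bound on $-V$ on a large ball to force small-action loops to live far from the singularities, then deform such loops to their mean values and observe that the resulting set of constant loops has category at most $2$ in $\Lambda$. The only cosmetic differences are that the paper argues by contradiction with a sequence $A_n\in\mathcal F_3$ (taking $\epsilon=1/n$), uses the Sobolev inequality in place of Cauchy--Schwarz for the oscillation estimate, and concludes via $\cat_X(\Xi,\Xi)=2$ for $\Xi\cong\mathbb R^3\setminus B_R(0)$ rather than retracting to a sphere; the logic is the same.
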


\begin{proof}Suppose by contradiction that $c_3=0$, that is
$$
\inf_{A \in \mathcal{F}_3} \sup_{x \in A} I(x)=0.
$$
Then, for every $n\in \NN$ there exists $A_n\in \mathcal{F}_3$ such that
\[
0\leq \sup_{x \in A_n} I(x) <\dfrac{1}{n}.
\]
Of course, $A_n\subset D_I$. Hence, taking into account that $I=\psi+\Phi$, with $\psi \geq 0$ and $\Phi > 0$ in $D_I$ (cf. \eqref{segnophi}), we get
\begin{equation} \label{eq:pr102}
0\leq \psi(x) <\dfrac{1}{n}\quad \mbox{ and } \quad 0 \leq \Phi(x) <\dfrac{1}{n},\quad \forall \ x\in A_n.
\end{equation}
In particular, noticing that
\[
\psi (x)\geq \dfrac{1}{2}\, \int_0^T |\dot{x}(t)|^2\, dt,\quad \forall \ x\in A_n,
\]
we obtain
\begin{equation} \label{eq:pr103}
||\dot{x}||_{L^2} <\sqrt{\dfrac{2}{n}},\quad \forall \ x\in A_n,
\end{equation}
and so, from Sobolev inequality (see, for instance, \cite[Proposition 1.3]{MaWi89}),
\begin{equation}\label{sobolev}
\vert \tilde x(t) \vert \leq \sqrt{\frac{T}{6n}}, \quad \forall \ x\in A_n, \ t \in [0,T],
\end{equation}
where we have written as usual $x(t) = \bar{x} + \tilde x(t)$, with $\bar{x} = \tfrac{1}{T} \int_0^T x(t)\,dt$.

Now, we claim that, fixed an arbitrary constant $R > 0$ with the property that
\begin{equation} \label{eq:normalizzazione}
\max \{|r_i(t)|:\ t\in [0,T],\ i=1,\ldots, N\}\leq \dfrac{R}{2},
\end{equation}
there exists $n^*\in \NN$ such that, for every $n \geq n^*$,
\begin{equation} \label{eq:pr105}
x\in A_n \quad \Longrightarrow \quad |x(t)| \geq 2R, \quad \forall \ t \in [0,T].
\end{equation}
Indeed, let $\iota^*$ be defined by
\[
\iota^*=\inf \{-V(t,x):\ t\in [0,T], \ |x|<3R,\ (t,x)\in \Omega \}
\]
and observe that $\iota^*>0$ by assumption (V); moreover, let $n^*\in \mathbb{N}$ be such that
\begin{equation} \label{eq:pr103ter}
n^*\geq \max \left(\dfrac{1}{(1-\kappa')\, \iota^*\, T}, \frac{2T}{R^2}\right).
\end{equation}
Assume now by contradiction that there exist $n \geq n^*$, $x \in A_n$ and $t_0 \in [0,T]$ such that $|x(t_0)|< 2R$. Then,
from \eqref{eq:pr103} we infer that 
\[
|x(t)|\leq |x(t_0)|+\int_0^T |\dot{x}(t)|\, dt \leq |x(t_0)|+\sqrt{\frac{2T}{n}},\quad \forall \ t\in [0,T],
\]
and hence from \eqref{eq:pr103ter} we deduce that $|x(t)|<3R$, for every $t\in [0,T]$. Therefore, recalling \eqref{eq:relazione} and \eqref{eq:pr102}, we have
\[
\dfrac{1}{n^*}> \Phi(x)\geq (1-\kappa') \, \int_0^T -V(t,x(t))\,dt > (1-\kappa')\, \iota^*\, T,
\]
which contradicts \eqref{eq:pr103ter}.

At this point, we notice that from \eqref{sobolev} and \eqref{eq:pr103ter} it follows that
$$
\vert \tilde x(t) \vert \leq R, \quad \forall \ x\in A_n, \ t \in [0,T].
$$
Hence, taking into account \eqref{eq:pr105} we deduce that, for every $n \geq n^*$,
\begin{equation} \label{eq:pr105bis}
x\in A_n \quad \Longrightarrow \quad |\bar x + (1-\lambda) \tilde x(t)| \geq R, \quad \forall \ t \in [0,T], \ \forall \lambda \in [0,1].
\end{equation}
In particular, recalling \eqref{eq:normalizzazione}, $\bar{x} + (1-\lambda) \tilde x \in \Lambda$ for every $x \in A_n$ and $\lambda \in [0,1]$.
Hence, the map $H: [0,1] \times A_n \to \Lambda$ given by
$$
H(\lambda,x) = \bar{x} + (1-\lambda) \tilde x
$$
provides a deformation in $\Lambda$ of $A_n$ into $A_n' = H(1,A_n)$.
Hence, by property (P4) of the category,
\begin{equation}\label{categoria1}
\textrm{cat}_X (A_n,\Lambda)\leq \textrm{cat}_X (A_n',\Lambda).
\end{equation}
On the other hand, we observe that, setting 
$$
\Xi = \{ x \in \Lambda \, : \, x(t) \equiv c \mbox{ with } \vert c \vert \geq R\},
$$ 
from \eqref{eq:pr105bis} we have that $A_n' \subset \Xi \subset \Lambda$.
Hence, from properties (P1) and (P2) of the category,
\begin{equation}\label{categoria2}
\textrm{cat}_X (A_n',\Lambda) \leq \textrm{cat}_X (\Xi,\Lambda) \leq \textrm{cat}_X (\Xi,\Xi).
\end{equation}
The set $\Xi$ is clearly homeomorphic to $\mathbb{R}^3 \setminus B_R(0)$ (with $B_R(0)$ the open ball of radius $R$) and so
$\textrm{cat}_X (\Xi,\Xi) = 2$. Hence, \eqref{categoria1} and  \eqref{categoria2} yield
$$
\textrm{cat}_X (A_n,\Lambda) \leq 2,
$$
contradicting the fact that $A_n \in \mathcal{F}_3$.
\end{proof}

\noindent
From Lemmas \ref{pro:assumptionI}, \ref{pro:ps}, \ref{pro:compatti} and \ref{pro:livelli} we deduce that all the assumptions of Theorem \ref{thm:minmax} are satisfied and then Theorem \ref{teo:main} is proved.

\section{Applications}\label{sec4}

In this section, we give some applications of our main result. 

The first one deals with the motion of a charge under the effect of the electric and magnetic field generated by $N$ moving charges. 

For the second one, we move to the interpretation of equation \eqref{eq:lorentz} in relativistic celestial mechanics, dealing with the motion of a particle in a perturbed Kepler potential.

\subsection{The Li\'enard-Wiechert potentials}

Les us consider the motion of a charged particle 
with $m/q = 1$ under the effect of $N$ moving electric point charges. 

We denote by $q_1, \ldots, q_N$ the moving charges and by $r_1,\ldots ,r_N$ their trajectories, which we assume to be $C^2$ functions $r_j : \mathbb{R} \to \mathbb{R}^3$, $T$-periodic and such that $\vert \dot r_j(t) \vert < c$ for every $t \in [0,T]$ and $r_i(t) \neq r_j(t)$ for every $t \in [0,T]$ and $i \neq j$ (cf. Section \ref{sec3}). 
\bigskip
\noindent
Let us now set, for $i=1,\ldots,N$,
$$
\beta_i(t)=\dfrac{\dot{r_i}(t)}{c},\quad \forall \ t\in [0,T],\ i=1,\ldots, N,
$$
and observe that 
\begin{equation} \label{eq:betastima}
||\beta_i||_\infty<1.
\end{equation}
Moreover, we define $\eta_i:\Omega \to \RR^3$ by
$$
\eta_i (t,x)=\dfrac{x-r_i(t)}{|x-r_i(t)|},\quad \forall \ (t,x)\in \Omega
$$
and $t_i:\Omega \to \RR$ by the implicit relation
\begin{equation} \label{eq:temporitardato}
t_i=t-\dfrac{1}{c}\, |x-r_i(t_i)|.
\end{equation}
It is well-known that, for every $i=1,\ldots, N$, the number $t_i$ is the \textit{retarted} time. The existence and uniqueness of a solution of \eqref{eq:temporitardato} for a fixed $(t,x)\in \Omega$ is a standard fact in special relativity and it can be proved by means of a plain implicit function argument, which also implies that $t_i$ is a function of class $C^1$.
Moreover, the periodicity of $r_i$ implies that $t_i$ is $T$-periodic as a function of the time variable $t$.

The Li\'enard-Wiechert scalar and vector potentials generated by the point charge source $q_i$, $i=1, \ldots, N$, acting on 
a charge at the point $(t,x)$, are given, respectively, by
\begin{equation} \label{eq:defvi}
V_{i}(t,x)=\dfrac{q_i}{4\pi \varepsilon_0}\, \dfrac{1}{1-\eta_i(t_{i},x)\cdot \beta_i(t_i)}\, \dfrac{1}{\vert x-r_i(t_{i}) \vert}
\end{equation} 
and
\begin{equation} \label{eq:defAi}
A_{i}(t,x)=\dfrac{\beta_i(t_i)}{c}\, V_i(t,x),
\end{equation} 
where $t_i = t_i(t,x)$ and $\varepsilon_0$ is the vacuum permittivity. For future reference, let us recall that the corresponding electric and magnetic fields are given by
$$
\begin{array}{ll}
E_i(t,x)=&\displaystyle \dfrac{q_i}{4\pi \varepsilon_0}\, \left(\dfrac{\eta_i(t_i,x)-\beta_i(t_i)}{\gamma_i^2\, (1-\eta_i(t_{i},x)\cdot \beta_i(t_i))^3}\, \dfrac{1}{\vert x-r_i(t_{i}) \vert^2}\vphantom{\frac{\dot \beta_i(t)}{\beta(t)^3}}\right.\\
&\\
&\left. +
\dfrac{\eta_i(t_i,x)\times ((\eta_i(t_i,x)-\beta_i(t_i))\times \dot{\beta_i}(t_i))}{c\, (1-\eta_i(t_{i},x)\cdot \beta_i(t_i))^3}\, \dfrac{1}{\vert x-r_i(t_{i}) \vert}
\right),
\end{array}
$$
where $\gamma_i = 1/\sqrt{1-\vert \beta_i \vert^2}$ is the Lorentz factor, and
\begin{equation} \label{eq:magnetico}
B_i(t,x)=\dfrac{\eta_i(t_i,x)}{c}\times E_i(t,x),
\end{equation}
respectively (cf. \cite{Ja75}). 

Let us notice that $V_i$ and $A_i$ (and then $E_i$ and $B_i$) are well-defined in $\Omega$: indeed, 
from \eqref{eq:temporitardato} we first deduce that
\[
x-r_i(t_{i})=0 \quad \Longleftrightarrow \quad t=t_i,
\]
thus implying that $(t,x)=(t_i,r_i(t_i))$, which is impossible if $(t,x)\in \Omega$. On the other hand, if $(t,x)\in \Omega$ we have 
\[
|\eta_i(t_{i},x)\cdot \beta_i(t_i)|\leq ||\beta_i||_\infty
\]
and then, by \eqref{eq:betastima},
\begin{equation} \label{eq:stimadenominatore}
1-\eta_i(t_{i},x)\cdot \beta_i(t_i)\geq 1-||\beta_i||_\infty >0.
\end{equation}

We are now in a position to state our result on periodic motions under Li\'enard-Wiechert potentials.
\begin{Theorem} \label{teo:LW}
	In the above setting, let us assume that $q_i<0$, for every $i=1,\ldots,N$. Let
	\begin{equation} \label{eq:defVLW}
	V(t,x)=\sum_{i=1}^{N} V_i(t,x),\quad A(t,x)=\sum_{i=1}^N A_i(t,x),
	\end{equation}
	for every $(t,x)\in \Omega$, where $V_i$ and $A_i$, $i=1,\ldots, N$, are given in \eqref{eq:defvi} and \eqref{eq:defAi}, respectively. 

\noindent
Then, the corresponding Lorentz force equation \eqref{eq:lorentz} has infinitely many $T$-periodic solutions.
\end{Theorem}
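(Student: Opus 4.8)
The plan is to verify that the Li\'enard-Wiechert potentials $V$ and $A$ defined in \eqref{eq:defVLW} satisfy all the hypotheses of Theorem \ref{teo:main}, and then to conclude by a direct application. As a preliminary step, I would record that $V$ and $A$ are of class $C^1$ and $T$-periodic in $t$ on the domain $\Omega$: this follows because the retarded time $t_i=t_i(t,x)$ is $C^1$ (by the implicit function theorem applied to \eqref{eq:temporitardato}) and $T$-periodic in $t$, because $r_i\in C^2$, and because the denominators $1-\eta_i(t_i,x)\cdot\beta_i(t_i)$ stay bounded away from zero thanks to \eqref{eq:stimadenominatore}.

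Next I would check condition (V). The sign condition $V<0$ is immediate: for each $i$ one has $q_i<0$, $1-\eta_i\cdot\beta_i>0$ by \eqref{eq:stimadenominatore} and $|x-r_i(t_i)|>0$, so every summand $V_i$ in \eqref{eq:defvi} is negative, and hence so is $V=\sum_i V_i$. For the Keplerian blow-up, the key elementary observation is that the retarded distance $|x-r_i(t_i)|$ and the actual distance $|x-r_i(t)|$ are comparable. Indeed, from \eqref{eq:temporitardato} together with $|r_i(t)-r_i(t_i)|\le\|\dot r_i\|_\infty|t-t_i|=\|\dot r_i\|_\infty|x-r_i(t_i)|/c$ one obtains $(1-\|\beta_i\|_\infty)\,|x-r_i(t_i)|\le|x-r_i(t)|$, whence $1/|x-r_i(t_i)|\ge(1-\|\beta_i\|_\infty)/|x-r_i(t)|$. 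Combining this with the bound $1-\eta_i\cdot\beta_i\le1+\|\beta_i\|_\infty$ yields $-V_i\ge\kappa/|x-r_i(t)|$ for a suitable $\kappa>0$ depending only on $q_i$ and $\|\beta_i\|_\infty$; since all the remaining summands $-V_j$ are positive, the same lower bound holds for $-V$, giving (V) (for any $\delta>0$, in fact).

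Condition (AV1) is where the structural relation \eqref{eq:defAi} pays off: since $A_i=\beta_i(t_i)V_i/c$ and $V_i<0$, one has $|A_i|\le(\|\beta_i\|_\infty/c)(-V_i)$, and summing gives $|A|\le(\kappa'/c)(-V)$ with $\kappa'=\max_i\|\beta_i\|_\infty<1$ by \eqref{eq:betastima}. Finally, for (AV2) I would note that $\nabla_xV+\partial_tA=-E$ and $\operatorname{curl}_xA=B$ are the total electric and magnetic fields, so (AV2) amounts to $|V|+|E|+|B|\to0$ as $|x|\to\infty$, uniformly in $t$. Since $r_i$, $\dot r_i$, $\ddot r_i$ are bounded, $|x-r_i(t_i)|\ge|x|-\max_i\|r_i\|_\infty\to\infty$ uniformly in $t$; then the explicit formulas show that $|V_i|$ is bounded by a constant times $1/|x-r_i(t_i)|$, while $E_i$ is dominated by its slowest-decaying (radiation) term, of order $1/|x-r_i(t_i)|$ and controlled by $\|\dot\beta_i\|_\infty<\infty$ since $r_i\in C^2$, and $|B_i|\le|E_i|/c$ by \eqref{eq:magnetico}. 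All three quantities therefore tend to zero uniformly in $t$.

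With (V), (AV1) and (AV2) verified, Theorem \ref{teo:main} applies and yields infinitely many $T$-periodic solutions. I expect the most delicate steps to be the two estimates feeding (AV2), namely the uniform-in-$t$ decay of $E$ and $B$, which requires keeping track of the retarded time inside the field formulas, together with the comparability of retarded and actual distance underlying the blow-up in (V); by contrast, the sign condition and (AV1) are essentially algebraic consequences of $q_i<0$ and $\|\beta_i\|_\infty<1$.
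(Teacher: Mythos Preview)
Your proposal is correct and follows essentially the same route as the paper: you reduce to Theorem \ref{teo:main} by checking (V) via the comparability of the retarded and actual distances, (AV1) directly from $A_i=\beta_i V_i/c$ with $\kappa'=\max_i\|\beta_i\|_\infty$, and (AV2) by rewriting it as $|V|+|E|+|B|\to0$ and using the explicit Li\'enard--Wiechert field formulas together with the boundedness of $r_i,\dot r_i,\ddot r_i$. The only cosmetic difference is that for the blow-up in (V) you bound the factor $1-\eta_i\cdot\beta_i$ from above by $1+\|\beta_i\|_\infty$, whereas the paper invokes the lower bound \eqref{eq:stimadenominatore}; either way one obtains a positive constant $\kappa$, which is all that is needed.
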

 
\begin{proof} The result follows from Theorem \eqref{teo:main}. We need to show that $V$ and $A$ satisfy assumptions (V), (AV1) and (AV2).
	
\smallskip
As far as (V) is concerned, the assumption $q_i<0$, for every $i=1,\ldots, N$, implies that $V(t,x) < 0$, for every $(t,x)\in \Omega$.
Moreover, from \eqref{eq:temporitardato} and the definition of $\beta_i$ we deduce that
\begin{equation} \label{eq:pr11}
c(t-t_i)=|x-r_i(t_i)|\leq |x-r_i(t)|+|r_i(t)-r_i(t_i)|\leq |x-r_i(t)| + c ||\beta_i||_\infty (t-t_i),
\end{equation}
thus implying 
\[
c(t-t_i)\leq \dfrac{|x-r_i(t)|}{1-||\beta_i||_\infty }.
\]
Using this estimate in \eqref{eq:pr11}, we infer
$$
|x-r_i(t_i)|\leq \dfrac{1}{1-||\beta_i||_\infty}\, |x-r_i(t)|.
$$
This relation, together with \eqref{eq:stimadenominatore} and the sign assumption on the charges, implies that
\[
V_i(t,x) \leq \dfrac{q_i}{4\pi \varepsilon_0} \, \dfrac{1}{|x-r_i(t)|},
\]
for every $(t,x)\in \Omega$. Recalling that $V_j(t,x)<0$, for every $(t,x)\in \Omega$ and $j=1,\ldots, N$, we conclude that
\[
V(t,x) =V_i(t,x)+\sum_{j\neq i} V_j(t,x)\leq \dfrac{q_i}{4\pi \varepsilon_0} \, \dfrac{1}{|x-r_i(t)|},
\]
for every $(t,x)\in \Omega$. This proves the validity of \eqref{eq:Vsing} with $\kappa=\max \{\kappa_i:\, i=1,\ldots, N\}, \kappa_i=-q_i/4\pi \varepsilon_0$ and $\delta > 0$ arbitrary.

\smallskip
The validity of (AV1) is an immediate consequence of the definition of $A_i$ given in \eqref{eq:defAi}. Indeed, (AV1) is satisfied with
\[
\kappa'=\max\{||\beta_i||_\infty: \, i=1,\ldots, N\}
\] 
(observe that $\kappa'<1$ by \eqref{eq:betastima}).

\smallskip
Finally, we pass to the proof of the validity of (AV2), first observing that \eqref{eq-relelettricomagnetico} implies that (AV2) can be written as
\begin{equation} \label{eq-AV2riformulata}
\lim_{|x|\to +\infty} \left( |V(t,x)| + \vert \nabla_x E(t,x)\vert +\vert B(t,x) \vert \right) = 0,
	\end{equation}
	uniformly in $t\in \RR$. For every $i=1,\ldots, N$, from \eqref{eq:stimadenominatore} we infer that
\[
|V_i(t,x)| \leq \dfrac{-q_i}{4\pi \varepsilon_0} \, \dfrac{1}{1-||\beta_i||_\infty}\, \dfrac{1}{|x-r_i(t_i)|},
\]
for every $(t,x)\in \Omega$. Now, defining 
\[
\Theta=\max \{|r_i(t)|:\ t\in \RR,\ i=1,\ldots , N \},
\]
it is immediate to see that the set $E=\{(t,x)\in \RR\times \RR^3: \ |x|>\Theta+1\}$ satisfies $E\subset \Omega$ and that
\[
|V_i(t,x)| \leq \dfrac{-q_i}{4\pi \varepsilon_0} \, \dfrac{1}{1-||\beta_i||_\infty}\, \dfrac{1}{|x|-\Theta},\quad \forall \ (t,x)\in \Omega,\ |x|>\Theta+1,
\]
thus implying 
\begin{equation} \label{eq:stimainfinitoVi}
\lim_{|x|\to +\infty} |V_i(t,x)|=0,\quad \forall \ i=1,\ldots, N,
\end{equation} 
uniformly in $t\in \RR$. Taking again into account \eqref{eq:stimadenominatore}, the fact that $\eta_i$, $\beta_i$ and $\dot{\beta_i}$ are bounded and the definition of $\Theta$, we deduce that there exists $Z'>0$ such that
$$
|E_i(t,x)|\leq \dfrac{q_i}{4\pi \varepsilon_0} \, \dfrac{Z'}{(1-||\beta_i||_\infty)^3}\, \left(\dfrac{1}{|x|-\Theta}+\dfrac{1}{(|x|-\Theta)^2} \right),\quad \forall \ (t,x)\in \Omega, \ |x|>\Theta+1.
$$
This proves that
\begin{equation} \label{eq:stimainfinitoelettrico}
\lim_{|x|\to +\infty} |E_i(t,x)|=0, \quad \forall \ i=1,\ldots, N,
\end{equation}
uniformly in $t\in \RR$. Finally, from \eqref{eq:magnetico} and \eqref{eq:stimainfinitoelettrico}, recalling that $\eta_i$ is bounded, we infer
\begin{equation} \label{eq:stimainfinitomagnetico}
\lim_{|x|\to +\infty} |B_i(t,x)|=0, \quad \forall \ i=1,\ldots, N,
\end{equation}
uniformly in $t\in \RR$. From the fact that \eqref{eq:stimainfinitoVi},  \eqref{eq:stimainfinitoelettrico} and \eqref{eq:stimainfinitomagnetico} hold for every $i=1,\ldots, N$, recalling \eqref{eq:defVLW}, we can conclude that \eqref{eq-AV2riformulata} is fulfilled.
\end{proof}

\subsection{The forced relativistic Kepler problem}

Let us consider the equation
\begin{equation}\label{eqkep-fine}
	\frac{d}{dt}\left(\frac{m\dot{x}}{\sqrt{1-|\dot{x}|^{2}/c^{2}}}\right) =
	- \alpha \dfrac{x}{|x|^3}+\nabla_x U(t,x), \qquad x \in \mathbb{R}^3,
\end{equation}
interpreted as the relativistic Kepler problem ($m,\alpha > 0$), perturbed by an external force.

The following result holds true.
\begin{Theorem} \label{teo:MC}
	Let $U:\RR\times \RR^3\to \RR$ be a $C^1$ function, $T$-periodic in the first variable, satisfying $U(t,x) > 0$ for every $(t,x) \in \RR\times \RR^3$ and
	\begin{equation} \label{eq:ipoU}
	\lim_{|x|\to +\infty} (|U(t,x)|+|\nabla_x U(t,x)|)=0,
	\end{equation}
	uniformly in $t\in \RR$.
	
	\noindent
	Then, equation \eqref{eqkep-fine}
	 has infinitely many $T$-periodic solutions.
\end{Theorem}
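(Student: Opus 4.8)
The plan is to obtain the result as a direct consequence of Theorem \ref{teo:main}, after recasting \eqref{eqkep-fine} in the normalized form \eqref{eq:lorentz}. First I would divide \eqref{eqkep-fine} by the mass $m$, so that its left-hand side becomes exactly $\frac{d}{dt}\left(\frac{\dot{x}}{\sqrt{1-|\dot{x}|^{2}/c^{2}}}\right)$, matching \eqref{eq:lorentz}. Then I would take a single curve of singularities ($N=1$), namely the constant $r_1(t)\equiv 0$, which is trivially $T$-periodic, of class $C^1$ and satisfies $\|\dot r_1\|_\infty = 0 < c$; accordingly $\Omega=\{(t,x)\in\RR\times\RR^3 : x\neq 0\}$. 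I would set the magnetic potential $A\equiv 0$ (hence $B=\textnormal{curl}_x A=0$) and the electric potential
\[
V(t,x)=-\frac{\alpha}{m|x|}-\frac{1}{m}\,U(t,x),\qquad (t,x)\in\Omega,
\]
and verify that $-\nabla_x V=-\frac{\alpha}{m}\frac{x}{|x|^3}+\frac{1}{m}\nabla_x U$ reproduces, after the division by $m$, the right-hand side of \eqref{eqkep-fine}. It then remains to check assumptions (V), (AV1) and (AV2).

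For (V), since $\alpha,m>0$ and $U>0$, both summands defining $V$ are strictly negative, whence $V(t,x)<0$ on $\Omega$. Moreover, discarding the nonpositive term $-U/m$ yields at once
\[
V(t,x)\le -\frac{\alpha/m}{|x|}=-\frac{\alpha/m}{|x-r_1(t)|},\qquad \forall\ (t,x)\in\Omega,
\]
so that \eqref{eq:Vsing} holds with $\kappa=\alpha/m$ and $\delta>0$ arbitrary.

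Assumption (AV1) is immediate: since $A\equiv 0$ we have $|A(t,x)|=0\le -\frac{\kappa'}{c}V(t,x)$ for every $(t,x)\in\Omega$, the right-hand side being positive, so any $\kappa'\in(0,1)$ works. For (AV2), with $A\equiv 0$ the terms $\partial_t A$ and $\textnormal{curl}_x A$ vanish, so the condition reduces to showing $|V(t,x)|+|\nabla_x V(t,x)|\to 0$ uniformly in $t$ as $|x|\to+\infty$. This follows from \eqref{eq:ipoU} together with the explicit decay of the Keplerian terms, since $|V|\le \frac{\alpha}{m|x|}+\frac{1}{m}|U|$ and $|\nabla_x V|\le \frac{\alpha}{m|x|^2}+\frac{1}{m}|\nabla_x U|$, and both right-hand sides tend to $0$ uniformly in $t$.

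In fact this is an essentially routine verification; the only points requiring a little care, and thus the closest thing to a genuine obstacle, are the normalization by $m$ that brings \eqref{eqkep-fine} into the exact form \eqref{eq:lorentz}, and the observation that the single, constant singularity curve $r_1\equiv 0$ is admissible in the framework of Section \ref{sec3} (the separation condition $r_i\neq r_j$ being vacuous when $N=1$). Once (V), (AV1) and (AV2) are checked, Theorem \ref{teo:main} provides infinitely many $T$-periodic solutions of the normalized equation, hence of \eqref{eqkep-fine}.
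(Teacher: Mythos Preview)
Your proposal is correct and follows essentially the same approach as the paper's own proof: recast \eqref{eqkep-fine} as an instance of \eqref{eq:lorentz} with $N=1$, $r_1\equiv 0$, $A\equiv 0$ and $V=-\alpha/(m|x|)-U/m$, and verify (V), (AV1), (AV2) directly. Your argument is in fact slightly more detailed, making explicit the division by $m$ and the correct value $\kappa=\alpha/m$.
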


\begin{proof} The result follows from Theorem \ref{teo:main}. Indeed, let us first observe that here 
$\Omega =\{(t,x)\in \RR\times \RR^3:\ x\neq 0\}$,
	\[
	V(t,x)=-\dfrac{\alpha}{m |x|}-\frac{1}{m}U(t,x),\quad A(t,x) = 0, \quad \forall \ (t,x)\in \Omega.
	\]	
	Then, from the sign condition on $U$ we plainly deduce that (V) (with $\kappa =1$ and arbitrary $\delta >0$) is satisfied.
	Moreover, assumption (AV1) is trivially fulfilled since $A\equiv 0$.
	Finally, from assumption \eqref{eq:ipoU}, we infer that
	\[
	\lim_{|x|\to +\infty} (|V(t,x)|+|\nabla_x V(t,x)|)=0,
	\]
uniformly in $t\in \RR$. Recalling again that $A\equiv 0$, this proves the validity of (AV2).	
\end{proof}

\noindent
A. Boscaggin\\
Dipartimento di Matematica ``Giuseppe Peano'', Università degli Studi di Torino\\
Via Carlo Alberto 10, 10123 Torino, Italy\\
alberto.boscaggin@unito.it
\medskip

\noindent
W. Dambrosio\\
Dipartimento di Matematica ``Giuseppe Peano'', Università degli Studi di Torino\\
Via Carlo Alberto 10, 10123 Torino, Italy\\
walter.dambrosio@unito.it
\medskip

\noindent
D. Papini \\
Dipartimento di Scienze Matematiche, Informatiche e Fisiche,
Università degli Studi di Udine\\
Via delle Scienze 206, 33100 Udine, Italy\\
\texttt{duccio.papini@uniud.it}


\begin{thebibliography}{99}
\bibitem{ACZbook}
A. Ambrosetti \& V. Coti Zelati, Periodic solutions of singular Lagrangian systems, Progress in Nonlinear Differential Equations and their Applications, 10, Birkh\"{a}user Boston, Inc., Boston, MA, 1993.
%
\bibitem{AnBa71}
C.M. Andersen \& H.C. von Baeyer, On classical scalar field theories and the relativistic Kepler problem, \textit{Ann. Physics} \textbf{62} (1971), 120--134.
%
\bibitem{ArBeTo19}
D. Arcoya, C. Bereanu \& P.J. Torres, 
Critical point theory for the Lorentz force equation, 
\textit{Arch. Ration. Mech. Anal.} \textbf{232} (2019), 1685--1724.
%
\bibitem{ArBeTo20}
D. Arcoya, C. Bereanu \& P.J. Torres, Lusternik-Schnirelman theory for the action integral of the Lorentz force equation, 
\textit{Calc. Var. Partial Differential Equations} \textbf{59} (2020), 32 pp.
%
\bibitem{BoDaFe22}
A. Boscaggin, W. Dambrosio \& G. Feltrin, Periodic solutions to a
perturbed relativistic Kepler problem, \textit{SIAM J. Math. Anal.} \textbf{53} (2021), 5813--5834.
%
\bibitem{BoDaPapp}
A. Boscaggin, W. Dambrosio \& D. Papini, Periodic solutions to relativistic Kepler problems: a variational approach, Ann. Sc. Norm. Super. Pisa Cl. Sci., to appear (2023).
%
\bibitem{Bo04}
T.H. Boyer, Unfamiliar trajectories for a relativistic particle in a Kepler or Coulomb
potential, \textit{Amer. J. Phys.} \textbf{72} (2004), 992--997.
%
\bibitem{Chang81}
K.C. Chang, Variational methods for nondifferentiable functionals and their applications to partial differential equations,
\textit{J. Math. Anal. Appl.} \textbf{80} (1981), 102--129.
%
\bibitem{Clarke90}
F.H. Clarke, Optimization and nonsmooth analysis, \textit{Classics in Applied Mathematics} \textbf{5},
SIAM, Philadelphia, PA, 1990.
%
\bibitem{FaHu89}
E. Fadell \& S. Husseini, A note on the category of the free loop space, \textit{Proc. Amer. Math. Soc.} \textbf{107} (1989), 10 pp.
%
\bibitem{FaHu91}
E. Fadell \& S. Husseini,
Category of loop spaces of open subsets in euclidean spaces,
\textit{Nonlinear Anal.} \textbf{17} (1991), 1153--1161.
%
\bibitem{FaHu92}
E. Fadell \& S. Husseini,
Infinite cup length in free loop spaces with an application to a problem of the N-body type,
\textit{Ann. Inst. H. Poincaré C Anal. Non Linéaire} \textbf{9} (1992), 305--319.
%
\bibitem{FLS64}
R. Feynman, R. Leighton \& M. Sands, The Feynman Lectures on Physics.
Electrodynamics, vol. 2. Addison-Wesley, Massachusetts, 1964.
%
\bibitem{GaTo20}
M. Garz\'on \& P.J. Torres, \textit{Periodic solutions for the Lorentz force equation with singular potentials},
Nonlinear Anal. Real World Appl. \textbf{56} (2020), 6 pp.
%
\bibitem{Gr04}
W. Greiner, Classical mechanics. Point Particles and Relativity, Class. Theor. Phys.,
Springer, New York, 2004.
%
\bibitem{Ja75}
J.D. Jackson, Classical electrodynamics, John Wiley \& Sons, Inc., New York-London-Sydney, 1975. 
%
\bibitem{LiMa04}
R. Livrea \& S. Marano,
\textit{Existence and classification of critical points for nondifferentiable functions}, 
Adv. Differential Equations \textbf{9} (2004), 961--978.
%
\bibitem{MaWi89}
J. Mawhin \& M. Willem, Critical point theory and Hamiltonian systems, vol. 74 of
Applied Mathematical Sciences, Springer-Verlag, New York, 1989.
%
\bibitem{MoPa99}
D. Motreanu \& P.D. Panagiotopoulos,
Minimax theorems and qualitative properties of the solutions of hemivariational inequalities,
\textit{Nonconvex Optimization and its Applications} \textbf{29}, Kluwer Academic Publishers, Dordrecht, 1999.
%
\bibitem{Palais66}
R.S. Palais,
Homotopy theory of infinite dimensional manifolds,
\textit{Topology} \textbf{5} (1966), 1--16.
%
\bibitem{Sz86}
A. Szulkin, Minimax principles for lower semicontinuous functions and applications to nonlinear boundary value problems,
\textit{Ann. Inst. H. Poincar\'{e} Anal. Non Lin\'{e}aire} \textbf{3} (1986), 77--109.
\end{thebibliography}
\end{document}